\newtheorem{theorem}{Theorem}
\newtheorem{definition}{Definition}
\newtheorem{corollary}{Corollary}
\newtheorem{lemma}{Lemma}
\newtheorem{remark}{Remark}
\renewcommand{\le}{\leqslant}
\renewcommand{\ge}{\geqslant}
\renewcommand{\epsilon}{\varepsilon}
\newcommand {\matl}{\left[ \begin{matrix}}
\newcommand {\matr}{\end{matrix}\right]}
\newcommand {\Exp}{ \mathbb E }
\newcommand {\sg}{\operatorname{sgn}}
\renewcommand {\Pr}{ \mathbb P }
\newcommand{\e}{\mathrm e}
\renewcommand{\d}{\mathrm{d}}
\newcommand{\CI}{\operatorname{CI}}
\newcommand{\diam}{\operatorname{diam}}
\newcommand{\up}{\max}
\newcommand{\lw}{\min}
\newcommand{\iid}{\overset{\mathrm{iid}}{\sim}}
\newcommand{\tv}{D_{\operatorname{TV}}}
\newcommand{\tvball}{\mathbb{B}_{\operatorname{TV}}}
\newcommand{\contam}{\mathcal{C}}
\newcommand{\Mrc}{M^{\mathsf{RC}}}
\newcommand{\Nrc}{N^{\mathsf{RC}}}
\newcommand{\CIRC}{\operatorname{CI}^{\mathsf{R}}}
\DeclareMathOperator*{\Expw}{\Exp}
\DeclareMathOperator*{\Prw}{\Pr}
\DeclareMathOperator*{\polylog}{polylog}
\newcommand{\rhatmu}{\widehat{\mu}^{\mathsf{R}}}
\newcommand{\proofs}{\cref{sec:pf}}
\newcommand{\sectionallcap}[1]{\section{#1}}
\begin{document}

%

%

\title{Huber-Robust Confidence Sequences\thanks{Accepted at the 26th International Conference on Artificial Intelligence and Statistics (AISTATS 2023)}}

\author[1]{Hongjian Wang}
\author[2]{Aaditya Ramdas}
\affil[1, 2]{Machine Learning Department, Carnegie Mellon University} 
\affil[2]{Department of Statistics and Data Science, Carnegie Mellon University}
\affil[ ]{\texttt{ \{hjnwang,aramdas\}@cmu.edu  }}

\date{\today} 

\maketitle

\begin{abstract}
  Confidence sequences are confidence intervals that can be sequentially tracked, and are valid at arbitrary data-dependent stopping times. This paper presents confidence sequences for a univariate mean of an unknown distribution with
  a known upper bound on the $p$-th central moment ($p>1$), but
allowing for (at most) $\varepsilon$ fraction of arbitrary distribution corruption,
as in Huber's contamination model. We do this by designing new robust exponential supermartingales, and show that the resulting confidence sequences attain the optimal width achieved in the nonsequential setting. Perhaps surprisingly, the constant margin between our sequential result and the lower bound is smaller than even fixed-time robust confidence intervals based on the trimmed mean, for example.
Since confidence sequences are a common tool used within A/B/n testing and bandits, these results open the door to sequential experimentation that is robust to outliers and adversarial corruptions. 
\end{abstract}

\sectionallcap{Introduction}
In this paper, we study the problem of robust, sequential mean estimation; we are not just interested in producing a point estimator of the mean (of which there are many robust ones), but in quantifying uncertainty in a manner that is both theoretically optimal and practically tight. Let $P$ be an unknown distribution over $\mathbb R$ with a known upper bound $\sigma^2$ on the variance, and unknown mean $\mu$ that we want to estimate. Let $Q$ be another distribution, seen as a ``corruption'' of $P$, such that its total variation (TV) distance from $P$ is at most $\varepsilon$. We assume that an infinite stream of data $X_1, X_2, \dots$ is generated i.i.d.\ according to the corrupted distribution $Q$. The task that this paper shall focus on, is the derivation of a robust \emph{confidence sequence} (CS) \citep{darling1967confidence} for $\mu$, which is a sequence of confidence intervals $\{ \CI_t \}_{t \in \mathbb N^+}$ that guarantees that
\begin{equation}\label{eqn:cs-def}
\forall \, \text{stopping times }\tau > 0, \      \Pr[ \mu \in  \CI_\tau   ] \ge 1-\alpha,
\end{equation}
where $\alpha$ is a predefined miscoverage tolerance.  \cite{howard2021time} showed that condition~\eqref{eqn:cs-def} is equivalent to the time-uniform coverage condition
\begin{equation}\label{eqn:cs-def-unif}
     \Pr[ \forall t \in \mathbb N^+, \  \mu \in  \CI_t   ] \ge 1-\alpha.
\end{equation}
Both (equivalent) guarantees are of course much stronger than fixed-time confidence intervals for $\mu$, for which the sample size must be fixed in advance. Instead a CS allows for sequentially tracking the mean and stopping at any data-dependent time, while still having correct inference. Note that the covered parameter $\mu$ is the mean of the uncorrupted $P$, while the mean of the corrupted, data-generating $Q$ may be arbitrarily away from $\mu$, or even undefined. Also, the variance of $Q$ need not exist.

Confidence sequences are increasingly common in sequential experimentation in the IT industry. Even though they are cast in terms of estimation, they can be used to define anytime-valid p-values~\citep{johari2015always,howard2021time} for composite hypotheses like testing if $\mu \le 0$. CSs are useful for A/B testing or multi-armed bandit testing~\citep{yang2017framework,howard2022sequential}. In fact, they are explicitly used in internal tools or external services of Adobe~\citep{AdobeExperimentation}, Amazon~\citep{EvidentlyAmazon} and Netflix~\citep{lindon2022rapid}, for example. CSs are an integral part of multi-armed bandit algorithms, right from the original papers on regret minimization~\citep[Section 4]{lai1985asymptotically} and best-arm identification~\citep{jamieson2014lil}. Thus robust CSs, that are theoretically tight and not practically loose, could have immediate downstream applications.

\subsection{Our Contributions}

The CS that we shall present in this paper is partly inspired by the non-sequential + non-robust study by \cite{catoni2012challenging}, and the recent sequential + non-robust extension due to \cite{wang2022catoni}. Like
the latter paper, both the upper and lower endpoints of the CS can be seen as M-estimators. In fact, setting $\epsilon=0$ in this work recovers the latter paper's results, which in turn recovers the former paper when interested only in fixed-time CIs under heavy-tailed settings. In this sense, the current paper strengthens the ties between heavy-tailed mean estimation and robust mean estimation, two goals that are often separately pursued but closely related~\citep{prasad2019unified}. 

Our CS not only works nonparametrically for any distribution $P$ and its corruption $Q$ that satisfy the succinct assumptions we stated in the beginning of this section, but also enjoys near-optimal tightness. To elaborate, the width of our robust CS provably matches the lower bound $\sigma \sqrt{\varepsilon}$ for \emph{any} robust estimation methods, sequential or not. Mathematically, we prove that its width is optimal up to a constant factor (currently $28$) with high probability. Perhaps both lower and upper bound can be improved by constant factors in future work, but the important point is that one does not appear to pay even this constant factor price in practice. This is to be contrasted with the non-sequential recent work by \cite{lugosi2021robust}, whose analysis only yields robust confidence intervals with widths at least $96\sqrt{2}$ times the lower bound. The reason behind it is simple: most works, including the above, are not aiming at tight or practical uncertainty quantification --- they typically design point estimators and prove mathematically that these are ``close'' to the true mean achieving the optimal rate in terms of $\sigma,\epsilon$ and sample size; these rate-optimal theorems can be translated to confidence intervals, but these are loose. It is true that these authors often do not to optimize their constants, but the reality is that if we desire a $(1-\alpha)$-CI using their techniques, we must pay a practical price for their loose analysis. This is because their constants appear in the actual construction of the CI. In contrast, our CS does not have large constants in the actual construction: the factor of 28 only arises in the theoretical analysis. This means that our CS is much tighter in practice (than their method, and than our bounds); see Figure~\ref{fig:vs-lm} for details.

\subsection{Related Work}
\textbf{Robust Statistics.} The coinage of the term ``robust'' in the statistical literature was first due to \cite{box1953non}, albeit more in today's sense of nonparametric statistics. Early pioneers in the development of the concept include \cite{tukey1960survey, huber1964robust, huber1965robust, huber1968robust, huber1973robust,bickel1965some}, and \cite{hampel1968contributions, hampel1971general}. While historically ``loaded with many, sometimes inconsistent, connotations'' \citep{ronchetti2009robust}, the concept of statistical robustness in recent years refers almost exclusively to the one pioneered by \cite{huber1964robust}, i.e.\ the setting where data are subject to a certain level of contamination --- which may either be on the underlying distribution \citep{huber1964robust,maronna2019robust,diakonikolas2019robust}, or directly on the data \citep{lecue2020robust,lugosi2021robust,minsker2021robust}. In either case, valid inference on functionals of the original, uncontaminated distribution is of interest. The definition of robustness (Definition~\ref{def:rci-rcs}) that we shall use in this paper also follows this recent convention.  Also, we discuss the relation to another notion of robustness in \cref{sec:rel}, as the premise for a comparative study.

Prominent ideas in robust estimation include, among others, M-estimators and trimming. The use of M-estimators in robust statistical procedures dates back to \cite{huber1964robust}, 
which achieve robustness by curbing and bounding the influence that individual data points can make on the statistics. A recent work by \cite{bhatt2022minimax}, concurrent with ours, shows that the fixed sample size M-estimator due to \cite{catoni2012challenging} is robustly minimax. On the other hand, trimming refers to the practice of directly discarding outliers \citep{anscombe1960rejection}, and trimmed means have long been known to be robust \citep{bickel1965some}. Recently, a sample splitting variant of the trimmed mean due to \cite{lugosi2021robust} was shown to be robust nonparametrically over all finite variance distributions, which we shall discuss in \cref{sec:trim}. Other recent techniques in obtaining robust estimators include median of means \citep{lerasle2011robust, depersin2022robust}, self-normalization \citep{minsker2021robust}, and filtering~\citep{diakonikolas2019robust}. 

\textbf{Sequential Statistics.} \cite{wald1945sequential} first formulated the problem of sequential statistical testing, as deciding to reject the null or not \emph{every time} a new data point is seen.  The sequential type I error is the probability of \emph{ever} rejecting the null when it is true. Apart from sequential tests, the issue of sequential validity leads to a tapestry of concepts including the anytime valid p-value \citep{johari2015always,howard2021time}, and the e-process \citep{howard2020time, grunwald2020safe, ramdas2022testing}. The confidence sequence~\eqref{eqn:cs-def} \citep{darling1967confidence} is the only concept that focuses on sequential estimation rather than testing, and from which the other tools can be derived. Since~\cite{howard2020time}, the use of nonnegative (super)martingales in conjunction with Ville's inequality (see \cref{sec:ville}) has been increasingly common practice in sequential inference, generalizing \citeauthor{wald1945sequential}'s \citeyearpar{wald1945sequential} likelihood ratios favored by the earlier works, and yielding new tools in nonparametric settings~\citep{howard2021time}. Sequentially valid methods provide a theoretical safeguard against the potential peril of p-hacking. We refer the reader to a recent survey \citep{ramdas2022game} for more details.

There have been a few studies in the literature that address the feasibility of robustifying sequential tests, mostly notably the works by \cite{huber1965robust} and by \cite{quang1985robust}, both of them robustifying the likelihood ratio by censoring. Hence they only apply to relatively simple parametric settings, and it is hard to generalize them into our nonparametric setting --- indeed our class of distributions consists of both continuous and discrete distributions with any support, and so there is no single common reference measure with respect to which one can define likelihood ratios between pairs of distributions in our class, and so we will need to entirely avoid likelihood ratio style methods.

In summary, we believe this is the first work to design a robust confidence sequence in any setting.

\sectionallcap{Preliminaries}

\subsection{Notation and Problem Setup}
Throughout the paper, $\mathbb R$ denotes the set of real numbers and $\mathcal{B}(\mathbb R)$ its Borel $\sigma$-algebra. The diameter of any $I \in \mathcal{B}(\mathbb R)$ is denoted by $\diam( I )$. $\mathbb N$ and $\mathbb N^+$ denotes respectively the set of nonnegative and positive integers. We fix a universal probability space $(\Omega, \mathcal{A}, \Pr)$ and a filtration $\{\mathcal{F}_t\}_{t \in \mathbb N}$ when discussing randomness.

Denote by $\mathcal{M}$ the set of all probability measures over $(\mathbb{R}, \mathcal{B}(\mathbb R))$. Its elements, understood as the distributions of real-valued data, are denoted by upper-case italic letters $P$, $Q$ and so on. The following important nonparametric subsets of $\mathcal{M}$ are of interest. We denote by $\mathcal{M}^1$ the set of all distributions with finite mean on $\mathbb{R}$, where $\mu: \mathcal{M}^1 \to \mathbb R$ denotes the mean functional
\begin{equation}\label{eq:defn-M-p}
    \mu(P) = \int x \, \d P.
\end{equation}
For $p > 1$, denote by $\mathcal{M}^p$ the subset of $\mathcal{M}^1$ of those distributions with that have both means and finite $p$-th moments on $\mathbb{R}$, while $v_p: \mathcal{M}^p \to \mathbb R$ denotes the $p$-th absolute central moment functional
\begin{equation}\label{eq:defn-v-p}
    v_p(P) = \int |x - \mu(P)|^p \, \d P. 
\end{equation}
Finally, we use $
    \mathcal{M}^p_{\kappa} = \{ P \in \mathcal{M}^p: v_p(P) \le \kappa \}
$ to denote distributions with  $p$-th absolute central moment bounded by $\kappa$. The familiar class of distributions with variance bounded by $\sigma^2$  is hence denoted by $\mathcal{M}^2_{\sigma^2}$.

Recall that the total variation (TV) distance $\tv$ is defined via $\tv(P, Q) = \sup_{A\in \mathcal{B}(\mathbb R)}|P(A) - Q(A)|$ and is a metric on $\mathcal{M}$. Further, it is an \emph{integral probability metric} in the sense that for any pair of real numbers $c_1 < c_2$,
\begin{equation}\label{eqn:tv-integral}
   \tv(P, Q) = \frac{1}{c_2 - c_1}\sup_{c_1 \le f \le c_2} \left| \Expw_{X\sim P} f(X) - \Expw_{X \sim Q} f(X) \right|,
\end{equation}
where the supremum is taken over all measurable functions from $\mathbb R$ to $[c_1, c_2]$.
For any $P \in \mathcal{M}$, we denote by $\tvball(P, \varepsilon)$ the \emph{closed TV ball} of radius $\varepsilon$ around $P$,
\begin{equation}
    \tvball(P, \varepsilon) = \{ Q \in \mathcal{M} : \tv(P, Q) \le \varepsilon \}.
\end{equation}

We are now ready to define robust CIs and CSs (applicable beyond the scope of this paper).

\subsection{Definition of a Robust CS}

\begin{definition}[Robust CI and CS]\label{def:rci-rcs} Let $\mathcal{P} \subseteq \mathcal{M}$ and $\chi : \mathcal{P} \to \mathbb R$ be a functional. A sequence of measurable interval-valued functions $\{ \CI_t \}$
\begin{equation}
    \CI_t : \mathbb R^t \to \Bigl\{ [l, u] : -\infty \le l \le u \le \infty   \Bigr\}
\end{equation}
is called a sequence of $\varepsilon$-robust $(1-\alpha)$-confidence intervals over $\mathcal{P}$ for the functional $\chi$, or $(\varepsilon, 1-\alpha)$-RCIs for $(\mathcal{P}, \chi)$ for short, if
\begin{equation}
    \forall P \in \mathcal{P}, \ \forall Q \in\tvball(P, \varepsilon), \ \forall t \in \mathbb N^+ , \ \Prw_{X_i \iid Q} \left [ \chi(P) \in \CI_t (X_1, \dots, X_t) \right] \ge 1- \alpha.
\end{equation}
$\{ \CI_t \}$ is called an $\varepsilon$-robust $(1-\alpha)$-confidence sequence over $\mathcal{P}$ for $\chi$, or an $(\varepsilon, 1-\alpha)$-RCS for $(\mathcal{P}, \chi)$, if
\begin{equation}
    \forall P \in \mathcal{P}, \ \forall Q \in \tvball(P, \varepsilon), \ \Prw_{X_i \iid Q} \left [ \forall t \in \mathbb N^+ , \  \chi(P) \in \CI_t (X_1, \dots, X_t) \right] \ge 1- \alpha.
\end{equation}
\end{definition}
The central question that the current paper seeks to answer is the possibility of constructing a tight RCS for $(\mathcal{M}^2_{\sigma^2}, \mu)$. While our method actually produces RCS for $(\mathcal{M}^p_{\kappa}, \mu)$ where $p$ may be any real number $>1$, we shall focus primarily on the $p=2$ case for simplicity until \cref{sec:inf-var}.
\begin{remark}
Our formulation of robustness via the TV-ball allows for a more general class of contaminations compared to the notion of ``gross error model'' by \cite{huber1964robust,huber1965robust,huber1968robust,huber1973robust} where data are drawn with $\varepsilon$ chance from an arbitrary other distribution. To wit, the $\varepsilon$-\emph{contamination neighborhood} in \citeauthor{huber1964robust}'s cited works, is
\begin{equation}\label{eqn:huber-contam}
    \contam(P, \varepsilon) := \{ (1-\varepsilon)P + \varepsilon P' : P' \in \mathcal{M} \},
\end{equation}
and it is easy to see that $\contam(P, \varepsilon) \subseteq \tvball(P,  \varepsilon)$.
\end{remark}

\begin{remark}\label{rmk:trivial-rcs}
Trivially, any RCS yields an RCI at a fixed time; and with any RCI, for example the one by \cite{lugosi2021robust}, one may define an RCS by simply applying a union bound to it (e.g.\ defining $\CI_t$ to be the $\left(\varepsilon, 1-\frac{\alpha}{T}\right)$-RCI for $t \le T$, and repeating $\CI_T$ for $t > T$, or alternatively defining it to be the entire real line before time $T$ and repeating $\CI_T$ after $T$). Our supermartingale techniques lead to much better performance than such trivial constructions, as shown at the end of \cref{sec:trim}.
\end{remark}


\begin{remark}
We note that in this paper $\sigma,\epsilon$ are assumed known. As discussed in~\cite{wang2022catoni} which deals with the contamination-free case, not knowing a bound on $\sigma$ makes nonasymptotic inference impossible. 
Indeed, the class $\mathcal{M}^2$ (or any $\mathcal{M}^p$) is one that satisfies the impossibility result of \citet[Theorem 1]{bahadur1956nonexistence}. Hence by \citet[Corollary 2]{bahadur1956nonexistence}, no nontrivial CI exists for $\mu$ over these classes. The smaller class $\mathcal{M}^2_{\sigma^2}$ (or any $\mathcal{M}^p_{\kappa}$) with known bounds $\sigma^2$ or $\kappa$ suffers from no such limitation.  \cite{catoni2012challenging} and \cite{bhatt2022nearly} use the well-known ``Lepskii's method'' to obtain \emph{point estimators} that adapt to unknown variance (whose dominant terms of the mean squared error depend on the true variance which is smaller than $\sigma^2$), but it is important to note that no adaptive CI exists (or can exist).
However, we do not know if robust CIs require the contamination parameter $\varepsilon$ to be known, and no prior work, to the best of our knowledge, either bypasses this requirement or proves it necessary. While \citet[Section 4.1]{minsker2021robust} have a robust mean estimator that ``adapts'' to $\varepsilon$, the corresponding CI does not (the coverage probability depends on the unknown number of outliers). We leave the problem to future work. 
\end{remark}

\subsection{Width Lower Bound}
It is well-known that robust mean estimators cannot attain consistency (meaning the width of robust CIs cannot shrink to zero), and various information-theoretic lower bounds state that an error scaling as a function (depending on the distribution class) of $\varepsilon$ is unavoidable for $\varepsilon$-robust methods \citep{chen2018robust,lugosi2021robust}.
For the sake of completeness, we state (and prove in \proofs) the following lower bound for the diameter of RCIs and RCSs for $(\mathcal{M}^2_{\sigma^2},\mu)$.

\begin{lemma}[Lower bound for RCIs and RCSs for $(\mathcal{M}^2_{\sigma^2},\mu)$]\label{lem:lb}
Suppose $\{ \CI_t \}$ is a sequence of $(\varepsilon, 1-\alpha)$-RCIs for $(\mathcal{M}^2_{\sigma^2},\mu)$. Then, there is some $P \in \mathcal{M}^2_{\sigma^2}$ such that
\begin{equation}
   \forall t \in \mathbb N^+ , \ \Prw_{X_i \iid P} \left[  \diam(\CI_t ) \ge \sigma \sqrt{\varepsilon} \right] \ge 1- 2 \alpha.
\end{equation}
Further, if $\{ \CI_t \}$ is an $(\varepsilon, 1-\alpha)$-RCS for $(\mathcal{M}^2_{\sigma^2},\mu)$, then
\begin{equation}
    \Prw_{X_i \iid P} \left[ \forall t \in \mathbb N^+ , \ \diam(\CI_t ) \ge \sigma \sqrt{\varepsilon} \right] \ge 1- 2 \alpha.
\end{equation}
\end{lemma}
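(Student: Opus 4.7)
The plan is a standard Le Cam two-point reduction, adapted to the contamination setting. The idea is to exhibit two distributions $P_+, P_- \in \cM^2_{\sigma^2}$ whose means are separated by $2\sigma\sqrt{\varepsilon}$ yet for which a single distribution $Q$ lies in $\tvball(P_+, \varepsilon) \cap \tvball(P_-, \varepsilon)$. Since the RCI must cover $\mu(P_\pm)$ whenever data is drawn from such a $Q$, a union bound will force $\CI_t$ to contain both means simultaneously with probability $\ge 1-2\alpha$, and hence to have diameter $\ge 2\sigma\sqrt{\varepsilon}$.

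For the construction, take
\begin{equation*}
    P_\pm \;=\; (1-\varepsilon)\delta_0 + \varepsilon\,\delta_{\pm \sigma/\sqrt{\varepsilon}}, \qquad Q = \delta_0.
\end{equation*}
A quick calculation gives $\mu(P_\pm) = \pm \sigma\sqrt{\varepsilon}$, $v_2(P_\pm) = (1-\varepsilon)\sigma^2 \le \sigma^2$, and $\tv(\delta_0, P_\pm) = \varepsilon$, so $P_+, P_-, Q$ all belong to $\cM^2_{\sigma^2}$ and $Q$ sits in both TV balls. Applying the RCI guarantee in \cref{def:rci-rcs} to the pairs $(P_+, Q)$ and $(P_-, Q)$ and union-bounding yields, for every fixed $t$,
\begin{equation*}
    \Prw_{X_i \iid Q}\!\left[ \{-\sigma\sqrt{\varepsilon},\, \sigma\sqrt{\varepsilon}\} \subseteq \CI_t \right] \;\ge\; 1 - 2\alpha,
\end{equation*}
on which event $\diam(\CI_t) \ge 2\sigma\sqrt{\varepsilon} \ge \sigma\sqrt{\varepsilon}$. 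Taking the ``$P$'' in the lemma statement to be $Q$ (itself in $\cM^2_{\sigma^2}$) gives the fixed-time conclusion.

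For the RCS version the argument is essentially identical: the time-uniform RCS coverage already carries the ``$\forall t$'' inside the probability, so the very same union bound over the two pairs $(P_\pm, Q)$ directly moves the quantifier across the union and delivers the time-uniform diameter bound. I do not foresee any real obstacle; the only points worth a moment of care are (a) requiring $\varepsilon \in (0,1)$ so that $P_\pm$ are valid probability measures, (b) checking that $Q = \delta_0$ itself belongs to $\cM^2_{\sigma^2}$ so that it is an admissible value of the lemma's ``$P$'' (trivial, since its variance is zero), and (c) noting that the construction gives the stated bound with constant $2$ rather than $1$, so any slack in the lemma's target of $\sigma\sqrt{\varepsilon}$ is absorbed.
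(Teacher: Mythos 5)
Your proof is correct and rests on the same mechanism as the paper's: exhibit distributions in $\mathcal{M}^2_{\sigma^2}$ whose means are order $\sigma\sqrt{\varepsilon}$ apart yet which a single data-generating distribution corrupts simultaneously within TV distance $\varepsilon$, then union-bound the two coverage guarantees (keeping the ``$\forall t$'' inside the probability for the RCS case). The only difference is the construction: the paper (\cref{lem:dirac-mixture}) draws data from the one-sided mixture $P_2=(1-\varepsilon)\delta_0+\varepsilon\delta_{\sigma/\sqrt{\varepsilon}}$, which plays the dual role of a true distribution and a corruption of $\delta_0$, giving separation $\sigma\sqrt{\varepsilon}$, whereas you draw data from $\delta_0$ viewed as the common corruption of the two symmetric mixtures $P_\pm$, giving separation $2\sigma\sqrt{\varepsilon}$ and hence a conclusion stronger than the stated lemma by a factor of $2$.
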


\subsection{Supermartingales and Ville's inequality}
\label{sec:ville}

A stochastic process $\{ M_t \}_{t \in \mathbb N}$ adapted to the filtration $\{ \mathcal{F}_t \}_{t \in \mathbb N}$ is called a supermartingale if $\Exp[M_t |  \mathcal{F}_{t-1}] \le M_{t-1}$ for all $t \in \mathbb N^+$. Nonnegative supermartingales are widely used to construct CSs  \citep{howard2021time, waudby2020estimating, wang2022catoni} thanks to the following theorem by \cite{ville1939etude}.
\begin{lemma}[Ville's inequality]\label{lem:vil}
Let $\{ M_t \}_{t \in \mathbb N}$ be a nonnegative supermartingale and $\alpha \in (0,1)$. Then, with probability at least $1-\alpha$, $\sup_{t \in \mathbb N^+} M_t \le \alpha^{-1} \Exp[M_0]$.
\end{lemma}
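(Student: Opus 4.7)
The plan is to reduce Ville's inequality to a Markov-type bound obtained via optional stopping. Set $c := \alpha^{-1}\Exp[M_0]$; it suffices to prove that $\Pr[\sup_{t \in \mathbb N^+} M_t > c] \le \alpha$, since the complement is exactly the event asserted in the lemma.

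First I would introduce the first-crossing stopping time $\tau := \inf\{ t \in \mathbb N^+ : M_t > c \}$, with $\inf \emptyset = \infty$, which is a stopping time with respect to $\{\mathcal{F}_t\}$ because $\{\tau \le t\} = \bigcup_{s \le t}\{M_s > c\} \in \mathcal{F}_t$. The key observation is that
\begin{equation}
\left\{ \sup_{t \in \mathbb N^+} M_t > c \right\} = \{ \tau < \infty \} = \bigcup_{n \in \mathbb N^+} \{ \tau \le n \},
\end{equation}
and the events on the right are increasing in $n$, so continuity of $\Pr$ reduces the problem to uniformly bounding $\Pr[\tau \le n]$.

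For each fixed $n$, $\tau \wedge n$ is a bounded stopping time, so by the optional stopping theorem applied to the supermartingale $\{M_t\}$ (in its standard form for bounded stopping times), $\Exp[M_{\tau \wedge n}] \le \Exp[M_0]$. On the event $\{\tau \le n\}$, nonnegativity and the definition of $\tau$ give $M_{\tau \wedge n} = M_\tau \ge c$; on $\{\tau > n\}$, $M_{\tau \wedge n} = M_n \ge 0$. Therefore
\begin{equation}
c \, \Pr[\tau \le n] \le \Exp[M_{\tau \wedge n} \mathbf{1}\{\tau \le n\}] \le \Exp[M_{\tau \wedge n}] \le \Exp[M_0],
\end{equation}
which rearranges to $\Pr[\tau \le n] \le \Exp[M_0]/c = \alpha$. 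Letting $n \to \infty$ yields $\Pr[\tau < \infty] \le \alpha$ and hence the claim.

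There is no real obstacle here; the only subtlety worth double-checking is the use of optional stopping, but since we truncate at a fixed deterministic $n$ and the process is nonnegative, this is the textbook case and no uniform integrability argument is required. One small technical point is ensuring that the strict inequality $M_\tau > c$ (rather than $\ge c$) propagates correctly through the monotone limit, which it does because $\{\sup_t M_t > c\}$ is exactly the union of $\{\tau \le n\}$ over $n$.
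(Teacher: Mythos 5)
Your proof is correct: the truncation at a deterministic time $n$, the optional stopping bound $\Exp[M_{\tau\wedge n}]\le\Exp[M_0]$, the Markov-type step on $\{\tau\le n\}$, and the monotone limit are all handled properly (the only degenerate cases, $\Exp[M_0]=0$ or $=\infty$, are trivial). The paper does not prove this lemma itself but defers to \citet{howard2020time}, and your argument is essentially that standard optional-stopping/first-crossing proof, so there is no substantive difference in approach.
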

We refer the reader to \cite{howard2020time} for a short, modern proof of \cref{lem:vil}.

\sectionallcap{Main Results}

We now present our RCS construction. It is based on the design of new robust  supermartingales, inspired by recent Catoni-style supermartingales~\citep{wang2022catoni}.

\subsection{Robust Nonnegative Supermartingales}

 Consider the ``narrowest possible'' influence function in \citet[Equation (2.3)]{catoni2012challenging}, given by
 \begin{figure}[h]
 \begin{minipage}{.55\textwidth}
       \begin{equation}\label{eqn:influence}
    \phi(x) = \begin{cases} \log 2, & x \ge 1, \\ -\log(1-x+x^2/2), & 0 \le x < 1, \\ \log(1 + x + x^2/2), & -1 \le x < 0, \\ -\log 2 , & x < -1. \end{cases}
    \end{equation}
\end{minipage}%
\begin{minipage}{0.5\textwidth} 
    \centerline{\includegraphics[width=0.7\textwidth]{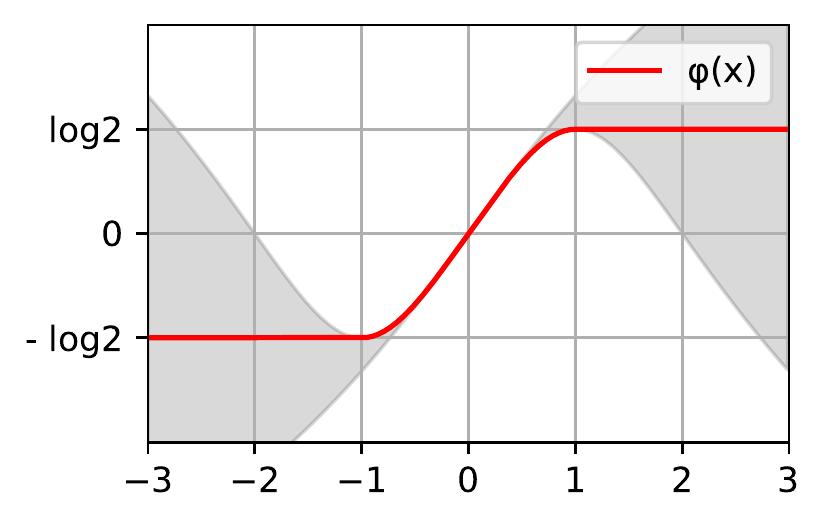}}
    \vspace{-.2in}
    \caption{\small The influence function~\eqref{eqn:influence}. The shaded region refers to Catoni's condition~\eqref{eqn:catoni-function-ineq}.}\label{fig:infl-func}
\end{minipage}
\end{figure}

It is easy to check (see \cref{fig:infl-func}) that it satisfies
\begin{equation}\label{eqn:catoni-function-ineq}
    -\log(1-x+x^2/2) \le \phi(x) \le \log(1 + x + x^2/2)
\end{equation}
and $|\phi(x)| \le \log 2$.  \cite{catoni2012challenging} uses any function that satisfies~\eqref{eqn:catoni-function-ineq} to construct sub-Gaussian M-estimators of mean over $\mathcal{M}^2_{\sigma^2}$, explicitly stating that their objective ``should not be confused with robust statistics''. However, in a concurrent work, \cite{bhatt2022minimax} show that~\eqref{eqn:catoni-function-ineq} does lead to (fixed-time) robustness if boundedness is further assumed. We shall show that \emph{time-uniform} robustness is also viable by this idea of logarithmic influence function. The key is to obtain the following pair of ``robust nonnegative supermartingales'' over the  entire TV ball around $P$.

\begin{lemma}[Robust Catoni Supermartingales]\label{lem:rcatsm} For any $P \in \mathcal{M}^2_{\sigma^2}$ and $Q \in \tvball(P, \varepsilon)$, let
\begin{equation}
    X_1, X_2, \dots \iid  Q, \quad \text{each }X_t \text{ being } \mathcal{F}_t \text{-measurable}.
\end{equation}
Let $\{ \lambda_t \}_{t \in \mathbb N^+}$ be an $\{\mathcal{F}_t\}$-predictable process. Then, the following processes $\{ \Mrc_t \}_{t \in \mathbb N}$, $\{ \Nrc_t \}_{t \in \mathbb N}$, with $\Mrc_0 = \Nrc_0 = 1$, are nonnegative supermartingales:
\begin{gather}
    \Mrc_t = \prod_{i=1}^t \frac{ \exp \{ \phi(\lambda_i (X_i - \mu(P)))  \}}{ 1+ \lambda_i^2 \sigma^2 / 2 + 1.5 \varepsilon }, \label{eqn:mrc}
    \\
    \Nrc_t = \prod_{i=1}^t \frac{ \exp \{ -\phi(\lambda_i (X_i - \mu(P)))  \}}{  1 +\lambda_i^2 \sigma^2 / 2 + 1.5 \varepsilon }. \label{eqn:nrc}
\end{gather}
\end{lemma}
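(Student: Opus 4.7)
The plan is the usual factoring: since $\lambda_t$ is $\mathcal{F}_{t-1}$-measurable, conditionally on $\mathcal{F}_{t-1}$ the ratio $\Mrc_t / \Mrc_{t-1}$ is a deterministic function of the single $Q$-distributed variable $X_t$, so nonnegativity is immediate and the supermartingale property reduces to showing
\[
\Exp_{X_t \sim Q}\bigl[\exp\{\phi(\lambda_t(X_t - \mu(P)))\}\bigr] \;\le\; 1 + \lambda_t^2 \sigma^2/2 + 1.5\varepsilon.
\]

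First I would establish the analogous bound under the uncontaminated distribution $P$. The right-hand inequality in Catoni's condition~\eqref{eqn:catoni-function-ineq} gives the pointwise estimate $\exp\{\phi(y)\} \le 1 + y + y^2/2$; setting $y = \lambda_t(X_t - \mu(P))$ and integrating against $P$ uses only $\Exp_P[X_t - \mu(P)] = 0$ and $\Exp_P[(X_t - \mu(P))^2] \le \sigma^2$ to obtain
\[
\Exp_{X_t \sim P}\bigl[\exp\{\phi(\lambda_t(X_t - \mu(P)))\}\bigr] \;\le\; 1 + \lambda_t^2 \sigma^2/2.
\]
This is the standard Catoni-style bound exploited in the contamination-free analysis of \cite{wang2022catoni}.

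The key new step, and what I expect to be the main obstacle, is transferring this bound from $P$ to the corrupted $Q$ with as little loss as possible. Here I would use the uniform bound $|\phi| \le \log 2$, which locates $\exp\{\phi(\cdot)\}$ inside $[1/2, 2]$, an interval of length $3/2$. Applying the integral-probability-metric representation~\eqref{eqn:tv-integral} of total variation with $c_1 = 1/2$, $c_2 = 2$ to $g(x) = \exp\{\phi(\lambda_t(x - \mu(P)))\}$ yields
\[
\bigl|\Exp_P g - \Exp_Q g\bigr| \;\le\; \tfrac{3}{2}\, \tv(P, Q) \;\le\; 1.5\varepsilon,
\]
and combining with the $P$-side Catoni bound gives exactly the inequality required for $\{\Mrc_t\}$. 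The delicate point is the choice of the interval $[1/2, 2]$: a narrower one is unavailable because $\phi$ saturates at $\pm \log 2$, and this is precisely what dictates the ``$1.5\varepsilon$'' appearing in the denominators of \eqref{eqn:mrc}--\eqref{eqn:nrc}.

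Finally, for $\{\Nrc_t\}$ I would note that $\phi$ is odd (a direct check on each of the four pieces of~\eqref{eqn:influence}), so $-\phi(y) = \phi(-y)$, and the same argument applied to the variable $-\lambda_t(X_t - \mu(P))$ goes through verbatim; equivalently, one invokes the left-hand inequality in~\eqref{eqn:catoni-function-ineq}. No new ideas are needed and $\exp\{-\phi(\cdot)\}$ again lies in $[1/2, 2]$, so the TV-transfer step contributes the same additive $1.5\varepsilon$.
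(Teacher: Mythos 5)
Your proposal is correct and follows essentially the same route as the paper's proof: bound $\exp\{\phi(\cdot)\}$ by $1+y+y^2/2$ and integrate under $P$ using mean zero and $v_2(P)\le\sigma^2$, then transfer from $P$ to $Q$ via the IPM form~\eqref{eqn:tv-integral} with range $[1/2,2]$, incurring exactly the $1.5\varepsilon$ in the denominator. Your treatment of $\Nrc_t$ via oddness of $\phi$ (equivalently the left-hand inequality in~\eqref{eqn:catoni-function-ineq}) matches what the paper dismisses as ``analogous.''
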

Note in particular that by setting $\epsilon=0$ and applying $1+x \le e^x$ to the denominator, one recovers the (non-robust) Catoni supermartingales of~\citet[Lemma 8]{wang2022catoni}.

The constant $1.5$ appearing above is not magical: it arises because the maximum and minimum values of $\exp(\phi(x))$ are 1/2 and 2, differing by 3/2. Indeed, the proof of \cref{lem:rcatsm}
is based on the boundedness of $\exp(\phi(x))$, with the property~\eqref{eqn:tv-integral} of TV that leads to the translation of expectation from under $Q$ to under $P$. Given the centrality of the above lemma, and its uniqueness as being the first robust nonnegative supermartingale we are aware of, we present the proof immediately.

\begin{proof}[Proof of \cref{lem:rcatsm}] Since $|\phi(x)| \le \log 2$, we have $ 1/2 \le \exp \{ \phi(\lambda_t (x - \mu(P))) \} \le 2$. Note that
\begin{align}
    & \Exp \left[ \frac{ \exp \{ \phi(\lambda_t (X_t - \mu(P)))  \}}{ 1 + \lambda_t^2 \sigma^2 / 2 + 1.5 \varepsilon } \middle \vert \mathcal{F}_{t-1} \right]
    \\
     = & \frac{\Expw_{X_t \sim Q}\left[ \exp \{ \phi(\lambda_t (X_t - \mu(P))) \} \right]}{  1 +\lambda_t^2 \sigma^2 / 2 + 1.5 \varepsilon  }
    \\
      \le & \frac{ \Expw_{X_t \sim P }\left[ \exp \{ \phi(\lambda_t (X_t - \mu(P))) \}  \right] + 1.5 \varepsilon  }{  1 +\lambda_t^2 \sigma^2 / 2 + 1.5 \varepsilon   }
    \\
    \le & \frac{\Expw_{X_t \sim P }\left[  1 + \lambda_t (X_t - \mu(P)) + \lambda_t^2 (X_t - \mu(P))^2/2  \right] + 1.5\varepsilon  }{ 1 +\lambda_t^2 \sigma^2 / 2 + 1.5 \varepsilon   }
    \\
    = & \frac{ \Expw_{X_t \sim P }\left[  1 + \lambda_t^2 (X_t - \mu(P))^2/2  \right] + 1.5 \varepsilon  }{  1 +\lambda_t^2 \sigma^2 / 2 + 1.5 \varepsilon }
    \\
    = & \frac{ (1 + \lambda_t^2 v_2(P)/2) + 1.5 \varepsilon  }{ 1 +\lambda_t^2 \sigma^2 / 2 + 1.5 \varepsilon  }  \le 1.
\end{align}
The first inequality above is due to~\eqref{eqn:tv-integral} and $Q \in \tvball(P, \varepsilon)$. Hence $\{ \Mrc_t \}$ is a supermartingale. The proof for $\{ \Nrc_t \}$ is analogous.
\end{proof}

It is clear from the proof above that the i.i.d.\ assumption $X_i \sim Q \in \tvball(P,\varepsilon)$ can be relaxed; it suffices to assume that each $X_i$ is generated from \emph{some} distribution (or even a random distribution measurable w.r.t.\ $\mathcal{F}_{t-1}$) in $\tvball(P,\varepsilon)$. We keep the i.i.d.\ assumption (and notation) throughout the paper for simplicity.

\subsection{Our Huber-Robust CS}

Applying Ville's inequality (\cref{lem:vil}) on~\eqref{eqn:mrc} and~\eqref{eqn:nrc} leads to our robust confidence sequence, stated next and proved soon after.

\begin{theorem}[Huber-robust CS]\label{thm:rcs}
Define $f_t(m) := \sum_{i=1}^t \phi( \lambda_i (X_i - m))$. We define $\CIRC_t (X_1, \dots, X_t)$ to be
\begin{equation}\label{eqn:rcs}
    \Bigl\{ m \in \mathbb R : |f_t(m)| \le \log(2 / \alpha ) + \sum_{i=1}^t \log \left( 1 + \lambda_i^2 \sigma^2 / 2 + 1.5 \varepsilon \right) \Bigr\}.
\end{equation}
Then, $\{ \CIRC_t \}$ is an $(\varepsilon, 1-\alpha)$-RCS for $(\mathcal{M}^2_{\sigma^2},\mu)$. 
\end{theorem}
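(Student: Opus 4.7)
The plan is to apply Ville's inequality (\cref{lem:vil}) separately to each of the two robust Catoni supermartingales $\{\Mrc_t\}$ and $\{\Nrc_t\}$ constructed in \cref{lem:rcatsm}, and combine the resulting high-probability bounds via a union bound. Since \cref{lem:rcatsm} has already done the heavy martingale work (including the reduction from $Q$ to $P$ using the integral-probability-metric property of TV), the remainder is essentially an exercise in bookkeeping.

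First I would fix $P \in \mathcal{M}^2_{\sigma^2}$ and $Q \in \tvball(P,\varepsilon)$ and consider i.i.d.\ samples $X_i \sim Q$ together with any $\{\mathcal{F}_t\}$-predictable tuning sequence $\{\lambda_t\}$. By \cref{lem:rcatsm}, both $\Mrc_t$ and $\Nrc_t$ (with $m=\mu(P)$ in the definition of $\phi$) are nonnegative supermartingales started at $1$. Applying \cref{lem:vil} to each with level $\alpha/2$ and then taking a union bound, with probability at least $1-\alpha$ we have for all $t \in \mathbb N^+$ simultaneously,
\begin{equation}
    \Mrc_t \le \frac{2}{\alpha}, \qquad \Nrc_t \le \frac{2}{\alpha}.
\end{equation}

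Next I would take logarithms of both inequalities. Using the definitions~\eqref{eqn:mrc} and~\eqref{eqn:nrc} and the function $f_t$ in the theorem statement, the first bound rearranges to
\begin{equation}
    f_t(\mu(P)) \le \log(2/\alpha) + \sum_{i=1}^t \log\bigl( 1 + \lambda_i^2 \sigma^2 / 2 + 1.5 \varepsilon \bigr),
\end{equation}
and the second (which carries the negative sign in the exponential) rearranges to
\begin{equation}
    -f_t(\mu(P)) \le \log(2/\alpha) + \sum_{i=1}^t \log\bigl( 1 + \lambda_i^2 \sigma^2 / 2 + 1.5 \varepsilon \bigr).
\end{equation}
Taken together these are exactly $|f_t(\mu(P))| \le \log(2/\alpha) + \sum_{i=1}^t \log(1+\lambda_i^2\sigma^2/2 + 1.5\varepsilon)$, which is the membership condition defining $\CIRC_t$. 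Hence $\mu(P) \in \CIRC_t$ for all $t$ with probability at least $1-\alpha$, verifying \cref{def:rci-rcs}.

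I do not anticipate a real obstacle here: the entire difficulty has already been absorbed into \cref{lem:rcatsm}, which is where the robust moment calculation and the use of~\eqref{eqn:tv-integral} took place. The only care needed in this step is to (i) split the miscoverage budget as $\alpha/2+\alpha/2$ so both one-sided deviations are controlled; (ii) ensure the supermartingale argument is applied under the data-generating measure $Q$ (which is fine because \cref{lem:rcatsm} established the supermartingale property under $Q$, not $P$); and (iii) observe that the resulting time-uniform bound on $|f_t(\mu(P))|$ is precisely the defining inequality of the set $\CIRC_t$, so no additional inversion step is required.
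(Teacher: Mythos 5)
Your proposal is correct and follows essentially the same route as the paper's own proof: apply Ville's inequality at level $\alpha/2$ to each of the supermartingales $\{\Mrc_t\}$ and $\{\Nrc_t\}$ from \cref{lem:rcatsm} under $Q$, take logarithms, and combine the two time-uniform one-sided bounds on $f_t(\mu(P))$ with a union bound to get the defining inequality of $\CIRC_t$.
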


The confidence sequence is defined like M-estimators are; indeed, both end points of the interval $\CIRC_t$ are level points of $f_t(m)$ (where the inequality holds with equality). If a point estimate of the mean is desired, we may define $\rhatmu_t$ as the solution to the estimating equation $f_t(m)=0$. While it is always true that $\rhatmu_t \in \CIRC_t$, note that $\rhatmu_t$ is not the center of $\CIRC_t$ (despite its apparently symmetric definition). 


\begin{proof}[Proof of \cref{thm:rcs}] For any $Q \in \tvball(P, \varepsilon)$, applying Ville's inequality (\cref{lem:vil}) to the nonnegative supermartingale $\{ \Mrc_t \}$ under $Q$, we infer the following: with probability at least $1-\alpha/2$, we have that $ \forall t \in \mathbb N^+$, 
\begin{equation}
    f_t(\mu(P)) \le \sum_{i=1}^t \log\{ 1+ \lambda_i^2\sigma^2 / 2 +1.5\varepsilon \} + \log(2/\alpha). 
\end{equation}
Applying the same logic to $\{ \Nrc_t \}$:  with probability at least $1-\alpha/2$, we have that $ \forall t \in \mathbb N^+$, 
\begin{equation}
    -f_t(\mu(P)) \le \sum_{i=1}^t \log\{ 1+ \lambda_i^2\sigma^2 / 2 +1.5\varepsilon \} + \log(2/\alpha).
\end{equation}
Combining them with a union bound concludes the proof of the theorem.
\end{proof}

The confidence sequence, as many previous ones in the literature, is tunable up to a sequence of parameters $\{ \lambda_t \}$, which could be seen as weights put on the data sequence $\{ X_t \}$. Past works that involve such sequence parameters have oftentimes set it to be a decreasing sequence with a rate of decay approximately $t^{-1/2}$ \citep{waudby2020estimating,wang2022catoni}, which led to a $t^{-1/2} \polylog (t)$ rate of shrinkage in the width of the CS. However, as we have seen in \cref{lem:lb}, shrinkage of any RCIs (and thus any RCS) to zero at \emph{any} rate is impossible, so intuitive a decaying $\{ \lambda_t \}$ would no longer be preferred.
Indeed, as we shall soon see in \cref{sec:tune}, the CS would unboundedly inflate if $\{\lambda_t \}$ is set either to decrease, or increase, at a polynomial rate. Instead, a constant sequence $\lambda_1 = \lambda_2 = \cdots = \lambda$, where $\lambda \propto \varepsilon^{1/2} \sigma^{-1}$, leads to near-optimal width of the CS.

Under the choice $\lambda = 0.5 \varepsilon^{1/2} \sigma^{-1}$, we conduct a simulation with true distribution $P$ being $\mathcal{N}(0, 9)$, along with a $1/9$ chance of contamination from an asymmetric $0.75$-L\'evy stable distribution with location parameter 0 and skewness parameter $\beta = 0.5$. Our RCS with $\sigma^2 = 9$, $\varepsilon = 1/9$ is compared to a non-robust CS for $\sigma^2$-subGaussian distributions of \citet[Eq. (11)]{howard2021time} in \cref{fig:vs}. In practice, our RCS is much better than the concentration bound we will prove in \cref{thm:tight}, presented next.

We finally remark on the computational aspects of our RCS in \cref{thm:rcs}. As the function $f_t(m)$ is continuous and monotonic, root-finding algorithms including the bisection method, secant method, and Brent's method can efficiently calculate the upper and lower endpoints according to~\eqref{eqn:rcs}. We can further accelerate the root-finding via \emph{warm starting}, viz., initializing the iterate for finding $\max (\CIRC_t)$ with the previous solution $\max (\CIRC_{t-1})$.

\begin{figure}[!h]
  \begin{minipage}[c]{0.3\textwidth}
    \includegraphics[width=\textwidth]{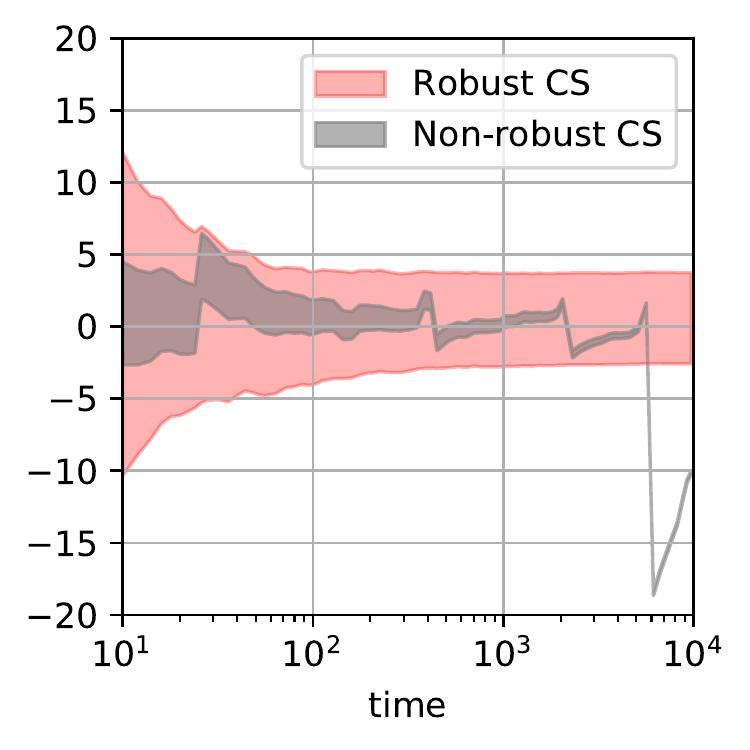}
  \end{minipage}\hfill
  \begin{minipage}[c]{0.7\textwidth}
    \caption{\small Our robust confidence sequence versus a non-robust confidence sequence by \cite{howard2021time}, under contaminated Gaussian data. Our RCS always cover the true mean $\mu(P)=0$ while non-robust CS does not. The $y$-axis of the plot scales one-to-one with the lower bound $\sigma \sqrt{\varepsilon} = 1$. Our RCS is tighter than our \cref{thm:tight} would predict.
    } \label{fig:vs}
  \end{minipage}
\end{figure}

\begin{figure*}[]
\centerline{\includegraphics[width=\textwidth]{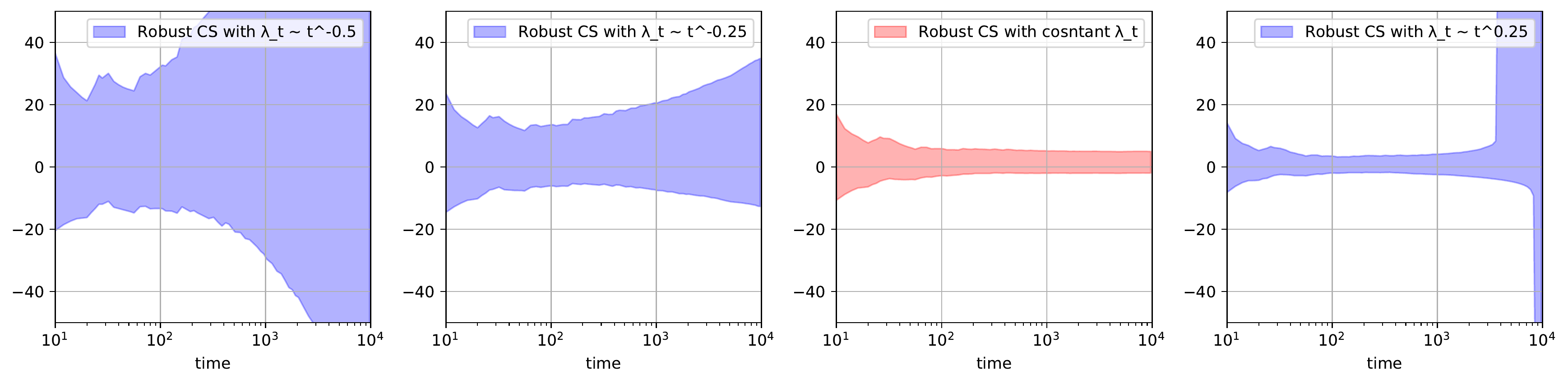}}
\caption{Our robust confidence sequences with different choice of $\{\lambda_t\}$. Indeed as discussed earlier, setting $\{\lambda_t \}$ to a constant multiple of $\sqrt\epsilon/\sigma$ (the third plot in red) controls the width, but other choices do not. 
}
\label{fig:lambda}
\end{figure*}

\subsection{Tuning and Width Analysis}
\label{sec:tune}

Our tightness bound for the RCS in \cref{thm:rcs} is stated in the form of a concentration bound, since the width of~\eqref{eqn:rcs}, unlike traditional confidence intervals, is random. Similar width concentration results have been used in random-width CSs by, for example, \cite{wang2022catoni}. Supposing that the weight sequence is taken constant, $\lambda_1  = \lambda_2 = \dots = \lambda$, we have the following bound.
\begin{theorem}\label{thm:tight} For any $0 < \delta, \alpha < 1$ and $0 < \varepsilon \le 1/7$,
suppose that $t > 4 \varepsilon^{-1} \log (4/\alpha \delta)$ and $\lambda^2 = \frac{\varepsilon}{4 \sigma^2}$. Under any corrupted distribution $Q \in  \tvball(P, \varepsilon)$, the $(\varepsilon,1-\alpha)$-RCS for $(\mathcal{M}^2_{\sigma^2},\mu)$ stated in \cref{thm:rcs} satisfies
\begin{equation}
    \Prw_{X_i \iid Q}\left[ \diam (\CIRC_t) \le 28 \sigma \sqrt{\varepsilon} \right] \ge 1 - \delta,
\end{equation}
matching the lower bound $\sigma\sqrt{\varepsilon}$.
\end{theorem}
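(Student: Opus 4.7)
The plan is to sandwich $\CIRC_t$ inside $[m^-, m^+]$ with $m^{\pm} := \mu(P) \pm w$ and $w := 14\sigma\sqrt{\varepsilon}$. Since $f_t(m)$ is strictly decreasing in $m$, this inclusion will hold as soon as
\begin{equation*}
    f_t(m^+) < -\log(2/\alpha) - t\log\bigl(1 + \lambda^2\sigma^2/2 + 1.5\varepsilon\bigr) \ \text{ and }\ f_t(m^-) > \log(2/\alpha) + t\log\bigl(1 + \lambda^2\sigma^2/2 + 1.5\varepsilon\bigr).
\end{equation*}
I would establish each of these with probability $\ge 1 - \delta/2$ and union bound, yielding $\diam(\CIRC_t) \le m^+ - m^- = 28\sigma\sqrt{\varepsilon}$.

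\textbf{Shifted Catoni supermartingales.} The key gadget is an $m$-parametrized extension of \cref{lem:rcatsm}: for any fixed $m \in \mathbb R$, set
\begin{equation*}
    D(m) := 1 + \lambda(\mu(P) - m) + \tfrac{\lambda^2}{2}\bigl(\sigma^2 + (\mu(P)-m)^2\bigr) + 1.5\varepsilon.
\end{equation*}
I claim that $\prod_{i \le t} \exp\{\phi(\lambda(X_i - m))\}/D(m)$ is a nonnegative supermartingale whenever $D(m) > 0$. The verification is a direct rerun of the proof of \cref{lem:rcatsm} with $\mu(P)$ replaced by $m$ inside $\phi(\cdot)$, combined with the variance decomposition $\Expw_{X \sim P}[(X-m)^2] = v_2(P) + (\mu(P)-m)^2 \le \sigma^2 + (\mu(P)-m)^2$. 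The mirror process built from $\exp\{-\phi\}$, with the sign of the linear term in $D(m)$ flipped, is a supermartingale by the same argument and handles the lower endpoint.

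\textbf{Applying Ville plus constant chasing.} I would instantiate these two supermartingales at $m^+$ and $m^-$ respectively and apply Ville's inequality (\cref{lem:vil}), noting that both denominators equal $1 - \lambda w + \lambda^2\sigma^2/2 + \lambda^2 w^2/2 + 1.5\varepsilon$ because $\mu(P) - m^{\pm} = \mp w$ and only the linear terms flip sign. With probability $\ge 1 - \delta/2$ each,
\begin{equation*}
    f_t(m^+) \le t \log D(m^+) + \log(2/\delta) \ \text{ and }\ -f_t(m^-) \le t\log D(m^+) + \log(2/\delta).
\end{equation*}
Substituting $\lambda = \sqrt{\varepsilon}/(2\sigma)$ and $w = 14\sigma\sqrt{\varepsilon}$ gives $\lambda w = 7\varepsilon$ and $\lambda^2 w^2/2 = 24.5\varepsilon^2$, so via $\log(1+z) \le z$ applied twice,
\begin{equation*}
    \log D(m^+) + \log\bigl(1 + \tfrac{\lambda^2\sigma^2}{2} + 1.5\varepsilon\bigr) \le -\lambda w + \lambda^2\sigma^2 + \tfrac{\lambda^2 w^2}{2} + 3\varepsilon = -3.75\varepsilon + 24.5\varepsilon^2 \le -\varepsilon/4,
\end{equation*}
the last inequality using $\varepsilon \le 1/7$. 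Multiplying by $t$ and invoking the hypothesis $t > 4\varepsilon^{-1}\log(4/\alpha\delta)$ converts each of the two Ville bounds into exactly the threshold inequalities displayed in the first paragraph, which closes the proof.

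\textbf{Main obstacle.} The only delicate point is constant management. The displayed arithmetic has essentially no slack: the shifted denominator $D(m^+)$ must simultaneously absorb the favorable linear drift $-\lambda w$, the \emph{enlarged} quadratic penalty $\lambda^2 w^2/2$ (which arises because $\Expw_P(X - m)^2$ picks up an extra $(\mu(P)-m)^2$ beyond $\sigma^2$), the corruption slack $1.5\varepsilon$, and the matching denominator $1 + \lambda^2\sigma^2/2 + 1.5\varepsilon$ baked into the definition of $\CIRC_t$. The condition $\varepsilon \le 1/7$ appears exactly as the threshold at which the net of these four contributions becomes $\le -\varepsilon/4$ for the choice $w = 14\sigma\sqrt{\varepsilon}$; any tighter constant would either require a better bound on the quadratic term or a more careful choice of $\lambda$.
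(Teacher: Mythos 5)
Your proposal is correct and follows essentially the same route as the paper: the same shifted robust supermartingales $M_t(m)$, $N_t(m)$ with denominator $1+\lambda(\mu(P)-m)+\tfrac{\lambda^2}{2}(\sigma^2+(\mu(P)-m)^2)+1.5\varepsilon$, the same concentration step (the paper uses $\Exp M_t(m)\le 1$ plus Markov at the fixed time $t$, while you invoke Ville, which is equally valid), and the same constants. The only difference is cosmetic: the paper solves the quadratic level equation for the root $\pi_t$ and bounds it by $\mu(P)+14\sigma\sqrt{\varepsilon}$ via $1-\sqrt{1-z}\le z$ and its Lemma on $[0,1/7]$, whereas you plug the candidate endpoints $\mu(P)\pm 14\sigma\sqrt{\varepsilon}$ in directly and verify the threshold inequality with $\log(1+z)\le z$, which checks out (including positivity of $D(m^{\pm})$ and the tightness of $\varepsilon\le 1/7$).
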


Despite its centrality, the proof of \cref{thm:tight} is a bit long, and is thus deferred to \proofs.

We remark that \cref{thm:tight} answers the question of tuning in a satisfactory manner, since we have shown that setting $\lambda \propto \varepsilon^{1/2} \sigma^{-1}$ leads to an optimal width. We further demonstrate the point by following experiments: we set the true distribution $P$ to be $\mathcal{N}(0, 9)$, with $1/9$ chance of contamination from an asymmetric $0.3$-L\'evy stable distribution with location parameter $1000$ and skewness parameter $\beta = 0.5$. RCS with $\sigma^2 = 9$, $\varepsilon = 1/9$ and $\lambda_t = 0.5 \varepsilon^{1/2} \sigma^{-1} t^{u} $, where $u \in \{ -0.5, -0.25, 0, 0.25 \}$. The comparison is shown in \cref{fig:lambda}. We can observe from the plots that only setting $\{\lambda_t\}$ to a constant can keep the RCS within constant width.





\sectionallcap{An Empirical Comparison}

Robust uncertainty quantification (i.e.\ valid $(1-\alpha)$-RCIs or RCSs) has not been a key focus of past work, which has been dominated by point estimators. Thus, we must in some sense extract an RCI from related work as a point of comparison, and we do this below.

\subsection{Relationship between Robustness Models}\label{sec:rel}

Apart from the robustness model in \cref{def:rci-rcs} we consider (which is itself a generalization of the model of \cite{huber1964robust}), there is in the literature another model of robustness, where, instead of an $\varepsilon$ chance of random contamination to the distribution, there is an adversary who can \emph{replace} up to $\varepsilon$ fraction of the data \emph{after} they are generated from the true distribution. This fully adversarial set-up has been considered, among others, by \cite{diakonikolas2019robust, cheng2019high}, and \cite{lugosi2021robust} whose distribution assumption closely matches ours (both being $\mathcal{M}^2_{\sigma^2}$). To compare our results to those by \cite{lugosi2021robust}, we first formally relate the robustness model they operate on to ours.

Let $\varepsilon\in(0,1)$ and $t \in \mathbb N^+$. Define $\mathcal{R}_{\varepsilon, t}$ as the class of all functions $\mathbb R^t \to \mathbb R^t$ that change at most $\lfloor \varepsilon t \rfloor$ of the $t$
coordinates into constants. For example, the function $c(x, y, z) = (x, 100, z)$ satisfies $c \in \mathcal{R}_{\varepsilon, 3}$ for all $\varepsilon \ge 1/3$. We call $\mathcal{R}_{\varepsilon, t}$ the set of \emph{$\varepsilon$-replacements}.

\begin{definition}[Replacement robust CI (RRCI)]\label{def:repl} Let $\mathcal{P} \subseteq \mathcal{M}$ and $\chi : \mathcal{P} \to \mathbb R$ be a functional. A sequence of measurable interval-valued functions $\{ \CI_t \}$ is called a sequence of $\varepsilon$-replacement robust $(1-\alpha)$-confidence intervals over $\mathcal{P}$ for the functional $\chi$, or $(\varepsilon, 1-\alpha)$-RRCIs for $(\mathcal{P}, \chi)$, if
\begin{equation}
    \forall P \in \mathcal{P}, \ \forall t\in \mathbb N^+, \ \Prw_{X_i \iid P} \left [{\forall c \in \mathcal{R}_{\varepsilon, t}, \ } \chi(P) \in \CI_t \left(c(X_1, \dots, X_t)\right) \right] \ge 1- \alpha.
\end{equation}
\end{definition}
We emphasize that the probability bound is uniform over $\forall c \in \mathcal{R}_{\varepsilon, t}$ because the adversary may examine the data before deciding on how to corrupt them with a replacement $c \in \mathcal{R}_{\varepsilon, t}$. While it is tempting to sequentialize \cref{def:repl} by, say, putting ``$\forall t\in \mathbb N^+$'' inside the probability bound, such ostensible generalization does not make practical sense.
As we have pointed out, \cref{def:repl} is motivated by the setting where an adversary corrupts the data \emph{offline}, after they are entirely generated, which is an inherently fixed-time scenario. By contrast, sequential statistics is concerned with the scenario where we face an infinite stream of data, and inference is conducted \emph{online} along the way, instead of post-hoc.
This is why we hold ourselves back from defining ``RRCS''. Still, we define RRCIs largely due to its following relation with RCIs in \cref{def:rci-rcs}.

\begin{lemma}\label{lem:repl-obliv}
Any sequence of $(2\varepsilon, 1-\alpha)$-RRCIs for $(\mathcal{P},\chi)$ is a sequence of $(\varepsilon, 1 - (\alpha + \e^{-2t\varepsilon^2}))$-RCIs for $(\mathcal{P},\chi)$.
\end{lemma}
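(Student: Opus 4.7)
My plan is a maximal coupling argument: I realize the iid-from-$Q$ data as an adversarial perturbation of iid-from-$P$ data touching only a small, Hoeffding-controlled number of coordinates, and then invoke the $(2\varepsilon, 1-\alpha)$-RRCI guarantee at the coupled realization.

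I would start by fixing $P \in \mathcal{P}$ and $Q \in \tvball(P, \varepsilon)$ and invoking the standard maximal-coupling characterization of $\tv$ to build a joint distribution on pairs $(X_i, Y_i)$ with marginals $X_i \sim P$ and $Y_i \sim Q$ such that $\Pr[X_i \ne Y_i] = \tv(P, Q) \le \varepsilon$. Taking $t$ independent copies makes the disagreement indicators $Z_i := \mathbf{1}\{X_i \ne Y_i\}$ iid Bernoulli with mean at most $\varepsilon$, so Hoeffding's inequality applied to $N := \sum_{i=1}^t Z_i$ yields
\begin{equation}
  \Pr[N > 2\varepsilon t] \le \Pr[N - \Exp N > \varepsilon t] \le \e^{-2 t \varepsilon^2};
\end{equation}
denote the complementary event by $E_2$, which since $N$ is integer-valued coincides with $\{N \le \lfloor 2\varepsilon t\rfloor\}$.

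On $E_2$ I would construct a realization-dependent replacement $c \in \mathcal{R}_{2\varepsilon, t}$ by setting its $i$-th output coordinate to the constant equal to the realized value $Y_i$ whenever $X_i \ne Y_i$, and to the $i$-th projection otherwise. By design, $c$ replaces at most $N \le \lfloor 2\varepsilon t\rfloor$ coordinates by constants (so $c \in \mathcal{R}_{2\varepsilon, t}$) and satisfies $c(X_1, \dots, X_t) = (Y_1, \dots, Y_t)$. Letting $E_1$ be the event (of probability $\ge 1 - \alpha$ under $X_i \iid P$) that $\chi(P) \in \CI_t(c'(X_1, \dots, X_t))$ for every $c' \in \mathcal{R}_{2\varepsilon, t}$, I instantiate this universal statement on $E_1 \cap E_2$ at my particular $c$ to obtain $\chi(P) \in \CI_t(Y_1, \dots, Y_t)$. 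A union bound then gives the desired coverage probability $1 - \alpha - \e^{-2 t \varepsilon^2}$, which since $Y_i \iid Q$ is exactly the RCI guarantee.

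The one step that requires care is the instantiation of the $\forall c'$ statement at a $c$ that depends on the joint realization of $(X, Y)$. This is legitimate because in \cref{def:repl} the quantifier $\forall c' \in \mathcal{R}_{2\varepsilon, t}$ sits \emph{inside} the outer probability over $X$, so the covering property holds pointwise on each realization of $X$ and we may plug in any replacement — even one we select after inspecting the realized $Y$. Everything else is routine bookkeeping around the coupling and Hoeffding's tail bound, so I do not foresee further obstacles.
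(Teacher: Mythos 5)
Your proposal is correct and follows essentially the same route as the paper's proof: a maximal coupling of $P$ and $Q$, Hoeffding's inequality to bound the probability that more than $\lfloor 2\varepsilon t\rfloor$ coordinates disagree, and a union bound combining this with the uniform-over-replacements RRCI guarantee. The paper phrases the middle step as the event that \emph{some} $c_0 \in \mathcal{R}_{2\varepsilon,t}$ maps $(X_1,\dots,X_t)$ to $(Y_1,\dots,Y_t)$, which is exactly your explicit realization-dependent construction of $c$, and your remark about instantiating the $\forall c$ quantifier pointwise is the same (correct) justification implicit there.
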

\cref{lem:repl-obliv} states that RRCIs can simulate RCIs up to a constant. While similar statements have been known, for example in \citet[Corollary 2.1]{jlilecturenotes}, we prove \cref{lem:repl-obliv} in full in \proofs. We now have the necessary terminology to introduce the trimmed mean RCI.

\subsection{Comparison with the Trimmed Mean RCI}
\label{sec:trim}

A variant of the trimmed mean based on sample splitting is shown by \cite{lugosi2021robust}
to be robustly concentrated in the replacement sense, which can be used to construct a sequence of RRCIs. We rephrase as follows the main finding of \cite{lugosi2021robust}.
\begin{theorem}[Theorem 1 of \cite{lugosi2021robust}, rephrased]\label{thm:lmci} Let $\widehat{\mu}_t(x_1, \dots, x_t)$ be (a slight variant of) the trimmed mean of $x_1, \dots, x_t$. Suppose $0 < \alpha < 1$, and $
    t/2 \ge \log(1/4\alpha)$.
Then, for all $P \in \mathcal{M}^2_{\sigma^2}$, with probability at least $1 - \alpha$ over $X_1, X_2, \iid P$,
\begin{equation}\label{eqn:lm-conc}
     |\widehat{\mu}_t(c(X_1, \dots, X_t)) - \mu(P)| \le 12\sqrt{2\varepsilon'}\sigma + 2\sigma \sqrt{\frac{\log(4/\alpha)}{t/2}}
\end{equation}
uniformly over all $c \in \mathcal{R}_{\varepsilon, t}$, where
\begin{equation}
    \varepsilon' = 8\varepsilon + \frac{12\log(4/\alpha)}{t/2}.
\end{equation}
\end{theorem}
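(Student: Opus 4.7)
The plan is to recover the Lugosi--Mendelson analysis of the sample-split trimmed mean. The estimator partitions $X_1,\dots,X_t$ into two halves of size $t/2$; the first half is used to compute data-driven clipping thresholds $\hat\alpha \le \hat\beta$ at empirical quantile levels roughly $\varepsilon'$ and $1-\varepsilon'$, and $\widehat{\mu}_t$ is defined as the sample mean of the second half after being clipped to $[\hat\alpha,\hat\beta]$. The inflated parameter $\varepsilon' = 8\varepsilon + 12\log(4/\alpha)/(t/2)$ is calibrated so that three distinct error sources---the original corruption fraction $\varepsilon$, the DKW-type fluctuation of the empirical CDF, and the direct effect of replacement---can all be absorbed into one threshold.

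I would proceed in three steps. First, show that with high probability the thresholds $\hat\alpha,\hat\beta$ (computed on a possibly corrupted first half) lie between the true $\Theta(\varepsilon')$ and $1-\Theta(\varepsilon')$ quantiles of $P$: the uncorrupted empirical CDF is uniformly within $O(\sqrt{\log(1/\alpha)/t})$ of the true CDF by the DKW inequality, and replacing $\varepsilon t$ points perturbs any empirical CDF by at most $\varepsilon$ in sup-norm. Second, decompose $\widehat{\mu}_t(c(X)) - \mu(P)$ into (i) a deterministic population clipping bias $\Exp_{X\sim P}[\mathrm{clip}(X;\alpha^*,\beta^*)] - \mu(P)$ where $\alpha^*,\beta^*$ are the true quantiles, which is $O(\sigma\sqrt{\varepsilon'})$ by Chebyshev using only the finite-variance hypothesis and produces the leading $12\sqrt{2\varepsilon'}\sigma$ term; (ii) a stochastic fluctuation around the expectation of the clipped mean on the second half, bounded by Bernstein/Hoeffding on the bounded, variance-at-most-$\sigma^2$ clipped variables, producing the $2\sigma\sqrt{\log(4/\alpha)/(t/2)}$ term; and (iii) the effect of replacing up to $\varepsilon t/2$ of the $Z_i$'s, which by the clipping bound contributes at most $\varepsilon(\hat\beta - \hat\alpha) = O(\sigma\sqrt{\varepsilon'})$ and is swallowed by the leading constant.

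The main obstacle is making everything uniform over all replacements $c \in \mathcal{R}_{\varepsilon, t}$: the adversary is allowed to observe all $X_i$ before jointly deciding how to corrupt both halves in the worst-case way. Uniformity over the first half is handled automatically, because the sup-norm CDF bound is intrinsically uniform over any $\varepsilon$-fraction perturbation of the samples. Uniformity over the second half follows from a coordinate-wise Lipschitz property of the clipped sample mean---modifying a single $Z_i$ changes $\widehat{\mu}_t$ by at most $(\hat\beta - \hat\alpha)/(t/2)$---so the worst case is bounded directly without any union bound over the exponentially many corruption patterns. A final union bound over the two halves closes the argument.
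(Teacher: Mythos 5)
You should note at the outset that the paper itself contains no proof of this statement: \cref{thm:lmci} is a rephrased citation of Theorem 1 of Lugosi and Mendelson (2021), so your sketch can only be measured against their original argument. Your overall architecture does match it: sample splitting, data-driven clipping thresholds at empirical quantile levels of order $\varepsilon'$, the three-way decomposition into population clipping bias (of order $\sigma\sqrt{\varepsilon'}$ by Cauchy--Schwarz/Chebyshev), stochastic fluctuation of the clipped mean on the second half, and the contribution of the at most $\varepsilon t$ replaced points, with uniformity over $c \in \mathcal{R}_{\varepsilon,t}$ obtained from the coordinatewise-Lipschitz property of the truncated mean rather than a union bound over corruption patterns. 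All of that is the right plan.

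The genuine gap is in your Step 1. You control the thresholds via the DKW inequality, i.e.\ an additive sup-norm CDF error of order $\sqrt{\log(1/\alpha)/t}$, plus an $\varepsilon$ shift from the replacements. With that accuracy you can only conclude that the true tail mass beyond the thresholds lies within $\varepsilon + O(\sqrt{\log(1/\alpha)/t})$ of $\varepsilon'$; the bias term then becomes of order $\sigma\sqrt{\varepsilon + \sqrt{\log(1/\alpha)/t}}$, and on the other side the true tail level at a threshold may be as small as $\varepsilon' - \varepsilon - O(\sqrt{\log(1/\alpha)/t})$, which can vanish, so you also lose control of $\hat\beta - \hat\alpha$ and hence of your corruption term (iii). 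This is incompatible with the stated calibration $\varepsilon' = 8\varepsilon + 12\log(4/\alpha)/(t/2)$: when $\varepsilon \ll \sqrt{\log(1/\alpha)/t}$ your route yields a bound scaling like $\sigma(\log(1/\alpha)/t)^{1/4}$, strictly weaker than the claimed $12\sqrt{2\varepsilon'}\sigma + 2\sigma\sqrt{\log(4/\alpha)/(t/2)}$. The actual proof does not use a uniform CDF bound; it applies multiplicative (binomial Chernoff/Bernstein) concentration to the counts of sample points beyond two \emph{fixed} true quantiles at levels comparable to $\varepsilon'$, and this works precisely because $\varepsilon'$ is inflated by the $12\log(4/\alpha)/(t/2)$ term so that $\varepsilon' \cdot (t/2) \gtrsim \log(4/\alpha)$ and the relative error of those counts is a constant. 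That quantile-level accuracy of order $\varepsilon + \log(1/\alpha)/t$, rather than $\sqrt{\log(1/\alpha)/t}$, is the missing idea; with it, the remainder of your outline goes through.
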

In particular, if we are to construct $(\varepsilon, 1-\alpha)$-RRCIs from the concentration bound~\eqref{eqn:lm-conc}, the width is strictly and deterministically greater than $
    24\sqrt{2\varepsilon'} \sigma$ which is in turn greater than $96 \sqrt{\varepsilon}$.\footnote{In \citet[Theorem 1]{lugosi2021robust}, the first displayed equation is correct (which we quote in~\eqref{eqn:lm-conc}), while their second displayed equation is off by a constant.}
For example, if we assume 
\begin{equation}
    t \ge \max\left\{ \frac{\varepsilon ^{-1}\log(4/\alpha)}{0.09}, 2\log(1/4\alpha) \right\}.
\end{equation}
Then,
$[\widehat{\mu}_t \pm 49 \sqrt{\varepsilon}\sigma]$
is a valid RRCI.
Now recall \cref{lem:repl-obliv}. To simulate a sequence of  $(\varepsilon, 1-\alpha)$-RCIs, one needs
 a sequence of $(2\varepsilon, 1-\alpha')$-RRCIs where $\alpha'$ is slightly smaller than $\alpha$. Hence, the RCIs simulated by \cref{thm:lmci} are at least $96\sqrt{2 \varepsilon}$ wide.
 Compare this with our RC\emph{S} which is only $28\sqrt{\varepsilon}\sigma$ wide, with high probability (\cref{thm:tight}). 

As mentioned before, \cref{thm:tight} is quite conservative about the tightness of our RCS. In practice, the advantage of our approach is even more pronounced.
The comparison shown in \cref{fig:vs-lm} is conducted under $\sigma^2 = 1/\varepsilon = 36$, with contamination from an asymmetric 0.3-L\'evy stable distribution with location parameter $1000$ and skewness parameter $\beta = 0.5$. When $t$ is small, \citeauthor{lugosi2021robust}'s \citeyearpar{lugosi2021robust} split-sample trimming is even undefined (all of the data trimmed);
and when their RCIs are defined, they are in a cosmic distance compared to our RCS. Other robust mean estimators over $\mathcal{M}_{\sigma^2}^2$ (e.g.\ \citealp{depersin2022robust}) suffer from even larger, ``galactic" concentration constants.

We remark that the advantage of our approach lies in the fact that it directly solves for the interval, instead of hinging on some concentration inequality on a point estimator (which causes many other works to have large constants, in turn causing very loose CIs). We do use concentration bounds to \emph{analyze} our widths in \cref{thm:tight}, but not to \emph{construct} them; thus constant-factor bottlenecks in theoretical analysis do hurt practical performance of other estimators, but not ours. Even if concentration constants are improved in other methods, they are unlikely to beat ours in practice as Ville’s inequality is known for its tightness.

\begin{wrapfigure}{r}{0.4\textwidth}
\centering
\includegraphics[width=0.4\textwidth]{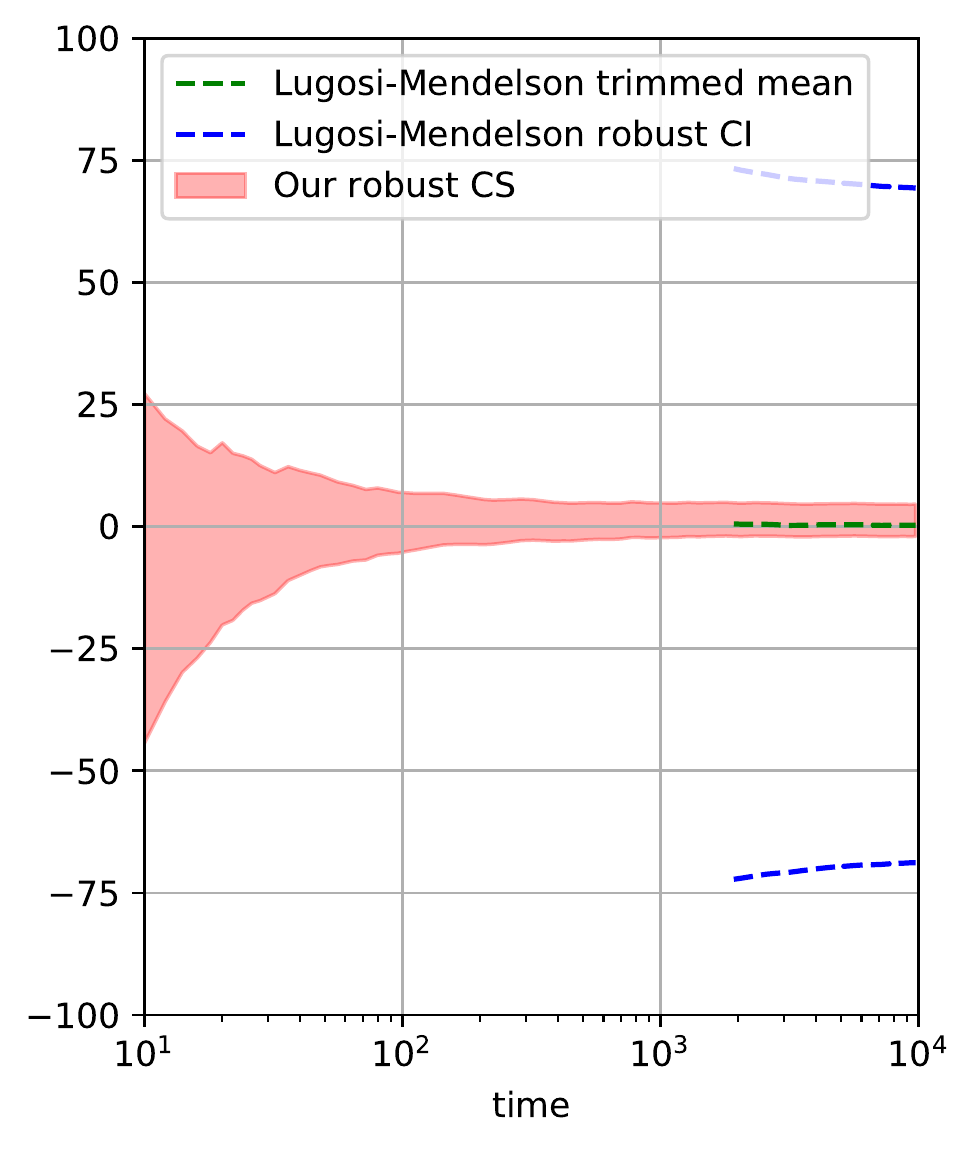}
\vspace{-20pt}
\caption{\small Comparison of our RCS with \citeauthor{lugosi2021robust}'s \citeyearpar{lugosi2021robust} RCIs. The lack of dashed lines before $t \approx 2 \times 10^3$ is because all the data are trimmed using the latter approach. Again, the $y$-axis of the plot scales one-to-one with the lower bound $\sigma \sqrt{\varepsilon} = 1$.} 
\label{fig:vs-lm}
\end{wrapfigure}

Another important advantage of our M-estimation approach lies in its large \emph{break-down point}. That is, it tolerates large amount of corruption. The advantage is also mentioned by \cite{bhatt2022minimax} in their fixed-time study, their robust M-estimator breaking down at a higher $\varepsilon$ ($36\%$ for $\mathcal{M}_{\sigma^2}^2$) compared to the trimmed mean of \cite{lugosi2021robust} (which breaks down at $\varepsilon = 1/16$). In our \cref{thm:rcs}, taking each $\lambda_i = \lambda$, the interval in~\eqref{eqn:rcs} will not span the entire $\mathbb R$ if $t \log 2 > \log(2/\alpha) + t \log (1 + \lambda^2 \sigma^2/2 + 1.5 \varepsilon)$, which will happen for large $t$ as long as $2 > 1 + \lambda^2 \sigma^2/2 + 1.5 \varepsilon$. Under our choice $\lambda^2 = \frac{\varepsilon}{4\sigma^2}$ this amounts to $\varepsilon < 8/13$.

Finally, we return to the trivial RCS construction mentioned in \cref{rmk:trivial-rcs}.  First, as we can observe in \cref{fig:vs-lm}, even the best fixed-time $(1-\alpha)$-RCIs are already very loose as they suffer from impractical large constants. Second, the use of union bounds will lead to even poorer results as union bounds can only be tight when the underlying events are nearly independent; here the miscoverage events at subsequent times are highly dependent. Nonnegative supermartingales and Ville's inequality, on the other hand, pay no such (indeed, constant-level) price. It is worth stressing here that \emph{constants matter} in confidence sequences, especially because they are widely used in deployed IT services.

\sectionallcap{Extensions}
\subsection{Robust Test Supermartingales}
The well-known duality between confidence intervals and hypothesis testing also extends to the robust sequential setting. For some set $S$ in the range of a functional $\chi$, consider the null hypothesis:
\begin{equation}
    \mathcal{H}_0 = \{ P \in \mathcal{P} : \chi(P) \in S \}.
\end{equation}
If $\{\CI_t\}$ is an $(\varepsilon, 1-\alpha)$-RCS for $(\mathcal{P}, \chi)$, then the sequential decision to reject $\mathcal{H}_0$ whenever $S$ and $\CI_t$ do not intersect attains the following robustified control of type I error:
\begin{equation}
    \forall Q \in  \tvball(\mathcal{H}_0, \varepsilon), \Prw_{X_i \iid Q} [\text{ever rejecting } \mathcal{H}_0] \le \alpha.
\end{equation}
Here $\tvball(\mathcal{H}_0, \varepsilon)$ is the closed $\varepsilon$-neighorhood of the null set $\mathcal{H}_0$ under the TV metric, i.e., $\bigcup_{P \in \mathcal{H}_0} \tvball(P, \varepsilon)$. Robust tests  thus  enlarge $\mathcal{H}_0$ to its neighbor $\tvball(\mathcal{H}_0, \varepsilon)$.

In particular, our RCS in \cref{thm:rcs} can be used to robustly test if $\mu(P) = \mu_0$. Equivalently, $\mu(P) = \mu_0$ is rejected when both of the supermartingales in \cref{lem:rcatsm} (replacing $\mu(P)$ with $\mu_0$) surpass $2/\alpha$. Moreover, similar to the discussion of \citet[Section 10.4]{wang2022catoni}, each one of the supermartingale pair in \cref{lem:rcatsm} works implicitly to test a one-sided composite null. To wit, whenever
\begin{gather}\label{eqn:mrc-mu0}
    \Mrc_t = \prod_{i=1}^t \frac{ \exp \{ \phi(\lambda_i (X_i - \mu_0))  \}}{  1+\lambda_i^2 \sigma^2 / 2 + 1.5 \varepsilon }
\end{gather}
is greater than $1/\alpha$, reject the one-sided composite
    \[
    \mathcal{H}_0 
    =  \{ P \in \mathcal{M}^2_{\sigma^2} : \mu(P) \le \mu_0 \}.
    \]
Then, the sequential type I error is controlled at level $\alpha$ over the enlarged null $\tvball(\mathcal{H}_0, \varepsilon)$. While it is natural to think that the powers of these tests, characterized by the growth of the test processes under the alternative, are reduced due to the trade-off with robustification (i.e.\ preventing false rejections due to corrupted data), an exponential rate of growth, similar to the nonrobust tests, can still be achieved under sufficient separation between the null and the alternative. We formalize this with the following dual to \cref{thm:tight}, proved in \proofs:
\begin{corollary}\label{cor:growth} Under the assumptions of \cref{thm:tight} except $t \ge 4\varepsilon^{-1} \log(4/\alpha\delta)$,
when $\mu(P) > \mu_0 + 14\sigma \sqrt{\varepsilon}$, the process $\{ \Mrc_t \}$ in~\eqref{eqn:mrc-mu0} 
grows exponentially with
\begin{equation}
 \Prw_{X_i \iid Q} \left[  \Mrc_t > \frac{\delta}{4} \exp\left(\frac{t \varepsilon}{4}\right) \right] \ge 1- \delta/2.
\end{equation}
\end{corollary}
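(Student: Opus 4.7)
The plan is to build a single nonnegative supermartingale $S_t := \exp(t\varepsilon/4)/\Mrc_t$ (so $S_0 = 1$) and invoke Ville's inequality directly. This avoids any Hoeffding/Bernstein concentration argument and hence any lower bound on $t$. Since $X_t$ is independent of $\mathcal{F}_{t-1}$ under $Q$,
\[
\Exp[S_t \mid \mathcal{F}_{t-1}] = S_{t-1} \cdot e^{\varepsilon/4}\left(1 + \lambda^2\sigma^2/2 + 1.5\varepsilon\right) \Expw_{X \sim Q}\!\left[e^{-\phi(\lambda(X - \mu_0))}\right],
\]
so $S_t$ is a supermartingale iff $K := e^{\varepsilon/4}(1+\lambda^2\sigma^2/2+1.5\varepsilon)\Expw_{X \sim Q}[e^{-\phi(\lambda(X-\mu_0))}] \le 1$.

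The first key step is a monotonicity reduction. For fixed $\lambda > 0$, $P$, and $Q$, as $\mu_0$ decreases the quantity $\lambda(X - \mu_0)$ rises pointwise, and by monotonicity of $\phi$ the integrand $e^{-\phi(\lambda(X-\mu_0))}$ falls pointwise; integrating under $Q$ preserves the inequality, so $K$ is non-increasing as $\Delta := \mu(P) - \mu_0$ grows. It thus suffices to verify $K \le 1$ at the boundary $\Delta = 14\sigma\sqrt{\varepsilon}$. There, mirroring the argument in the proof of \cref{lem:rcatsm}, the integral probability form of TV applied to $e^{-\phi} \in [1/2, 2]$ (range $3/2$) gives $\Expw_{X \sim Q}[e^{-\phi}] \le \Expw_{X \sim P}[e^{-\phi}] + 1.5\varepsilon$, and the Catoni-style inequality $e^{-\phi(x)} \le 1 - x + x^2/2$ implied by~\eqref{eqn:catoni-function-ineq}, together with $\Expw_{X \sim P}[X - \mu_0] = \Delta$ and $\Expw_{X \sim P}[(X - \mu_0)^2] \le \sigma^2 + \Delta^2$, yields $\Expw_{X \sim P}[e^{-\phi(\lambda(X-\mu_0))}] \le 1 - \lambda\Delta + \lambda^2(\sigma^2 + \Delta^2)/2$. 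Plugging $\lambda^2 = \varepsilon/(4\sigma^2)$ and $\Delta = 14\sigma\sqrt{\varepsilon}$ into this produces $\Expw_{X \sim Q}[e^{-\phi}] \le 1 - 43\varepsilon/8 + 49\varepsilon^2/2$.

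Taking logs in $K$ and using $\log(1 + x) \le x$ and $\log(1 - y) \le -y$ for $y \in (0,1)$ (the relevant $y = 43\varepsilon/8 - 49\varepsilon^2/2$ lies in $(0,1)$ whenever $\varepsilon \le 1/7$), one obtains
\[
\log K \le \varepsilon/4 + 13\varepsilon/8 - 43\varepsilon/8 + 49\varepsilon^2/2 = -7\varepsilon/2 + 49\varepsilon^2/2,
\]
which is $\le 0$ precisely when $\varepsilon \le 1/7$ --- exactly the assumed range. Hence $\{S_t\}$ is a nonnegative supermartingale with $S_0 = 1$, and \cref{lem:vil} with miscoverage $\delta/2$ yields $\Pr[\sup_{t' \ge 0} S_{t'} > 2/\delta] \le \delta/2$. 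On the complementary event, $\Mrc_t \ge (\delta/2)\exp(t\varepsilon/4) > (\delta/4)\exp(t\varepsilon/4)$ for every $t$, proving the corollary with a factor of $2$ to spare and, crucially, with no lower bound on $t$.

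The hard part is the algebraic verification in the boundary case, which is tight: the Catoni-style upper bound on $\Expw_{X \sim P}[e^{-\phi(\lambda(X-\mu_0))}]$ actually degrades (and exceeds $1$) for large $\Delta$, so the monotonicity reduction is essential --- without it, one would need a case split (e.g., a Chebyshev tail argument once $\lambda\Delta \gg 1$) that would not fit inside a single clean supermartingale. The threshold $\varepsilon \le 1/7$ and the constant $14$ appear precisely where the algebra saturates, mirroring the ingredients used in \cref{thm:tight}.
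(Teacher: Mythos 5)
Your proof is correct, and it takes a genuinely different route from the paper's. The paper obtains the corollary essentially for free from \cref{thm:tight}: it uses the already-established lower-endpoint bound $\Pr[\min(\CIRC_t) \ge \mu(P) - 14\sigma\sqrt{\varepsilon}] \ge 1-\delta/2$, notes that $\min(\CIRC_t) > \mu_0$ forces $M_t(\mu_0) = \Mrc_t > 1/\alpha$ by monotonicity of $m \mapsto M_t(m)$, and then picks the smallest $\alpha$ compatible with the sample-size condition $t \ge 4\varepsilon^{-1}\log(4/\alpha\delta)$, so that $1/\alpha = \tfrac{\delta}{4}\e^{t\varepsilon/4}$; the "except" clause in the statement reflects that $\alpha$ is chosen internally rather than assumed. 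You instead build a fresh nonnegative supermartingale $S_t = \e^{t\varepsilon/4}/\Mrc_t$, verify the one-step ratio is at most $1$ by the same TV-shift plus Catoni-quadratic bounds as in \cref{lem:rcatsm} (your boundary computation $\log K \le -\tfrac{7\varepsilon}{2} + \tfrac{49\varepsilon^2}{2} \le 0$ for $\varepsilon \le 1/7$ checks out, and the monotonicity-in-$\mu_0$ reduction is indeed needed since the quadratic bound degrades for large separation), and then invoke Ville. What your route buys: a time-uniform statement (all $t$ simultaneously rather than fixed $t$), no lower bound on $t$ whatsoever, and a slightly cleaner constant ($\Mrc_t \ge \tfrac{\delta}{2}\e^{t\varepsilon/4}$ versus the paper's $\tfrac{\delta}{4}$); it also makes transparent why the threshold $14\sigma\sqrt{\varepsilon}$ and $\varepsilon \le 1/7$ appear. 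What the paper's route buys: it recycles the width analysis with no new supermartingale or algebra, presenting the growth statement as a genuine corollary. Both are valid; yours is, if anything, a mild strengthening of the stated result.
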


\subsection{Infinite Variance}\label{sec:inf-var}
Recalling the notation around~\eqref{eq:defn-v-p}, we now turn to the question of constructing RCS for $(\mathcal{M}^p_{\kappa}, \mu)$, for $1 < p < 2$.

Now, instead of~\eqref{eqn:influence}, we define
\begin{equation}\label{eqn:influence-p}
    \phi_p(x) = \begin{cases} \log p, & x \ge 1, \\ -\log(1-x+x^p/p), & 0 \le x < 1, \\ \log(1 + x + |x|^p/p), & -1 \le x < 0, \\ -\log p , & x < -1. \end{cases}
\end{equation}
Then, akin to \cref{lem:rcatsm}, we have,
\begin{lemma}\label{lem:rcatsm-infvar} For any $P \in \mathcal{M}^p_{\kappa}$ and $Q \in \tvball(P, \varepsilon)$, let $
    X_1, X_2, \dots \iid  Q$, each $X_t$ being $\mathcal{F}_t$-measurable.
Let $\{ \lambda_t \}_{t \in \mathbb N^+}$ be a $\{\mathcal{F}_t\}$-predictable process. Then, the following processes $\{ {\Mrc_t}^p \}_{t \in \mathbb N}$, $\{ {\Nrc_t}^p \}_{t \in \mathbb N}$ are nonnegative supermartingales adapted to $\{\mathcal{F}_t\}$:
\begin{gather}
{\Mrc_0}^p = {\Nrc_0}^p = 1,  
    \\
    {\Mrc_t}^p = \prod_{i=1}^t \frac{ \exp \{ \phi_p(\lambda_i (X_i - \mu(P)))  \}}{ 1+ \lambda_i^p \kappa /p + (p-1/p) \varepsilon }, \label{eqn:mrc-p}
    \\
    {\Nrc_t}^p = \prod_{i=1}^t \frac{ \exp \{ -\phi_p(\lambda_i (X_i - \mu(P)))  \}}{  1 +\lambda_i^p \kappa / p + (p-1/p) \varepsilon }.\label{eqn:nrc-p}
\end{gather}
\end{lemma}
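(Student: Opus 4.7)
The plan is to carry out the proof of \cref{lem:rcatsm} almost verbatim, with two cosmetic changes: replace the quadratic bound $1+x+x^2/2$ on $\exp(\phi(x))$ by the $p$-th-power analogue $1+x+|x|^p/p$ on $\exp(\phi_p(x))$, and replace the range-based TV slack $1.5\varepsilon$ by the new range-based slack $(p-1/p)\varepsilon$ reflecting the amplitude of $\exp(\phi_p)$. Adaptedness and positivity are transparent from \eqref{eqn:mrc-p} and \eqref{eqn:nrc-p}, so the only computational goal is, for each $t$, to show
\[
    \Exp\left[\exp\{\phi_p(\lambda_t(X_t - \mu(P)))\} \,\middle\vert\, \mathcal{F}_{t-1}\right] \le 1 + \lambda_t^p \kappa / p + (p-1/p)\varepsilon,
\]
and the symmetric statement with $-\phi_p$ in place of $\phi_p$. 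Dividing by the matching denominator in \eqref{eqn:mrc-p}/\eqref{eqn:nrc-p} then pins $\Exp[\cdot \mid \mathcal{F}_{t-1}] \le 1$ at every step.

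I would execute this in four quick steps. \textbf{(a) Amplitude.} Read off from \eqref{eqn:influence-p} that $|\phi_p(x)| \le \log p$, so $\exp(\phi_p(\cdot))$ takes values in $[1/p, p]$, a range of width exactly $p-1/p$. \textbf{(b) Catoni-type pointwise bound.} Show that $\exp(\phi_p(x)) \le 1 + x + |x|^p/p$ for every real $x$: on $[-1,0]$ this holds with equality by definition; on $[0,1]$ it reduces to $(1+x+x^p/p)(1-x+x^p/p) \ge 1$, equivalent to $2x^p/p + x^{2p}/p^2 \ge x^2$, which holds because $x^p \ge x^2$ on $[0,1]$ for $p \le 2$; the two constant pieces on $|x| \ge 1$ are handled by the monotonicity of $1+x+|x|^p/p$ together with the elementary fact $(y^p-1)/p \ge y-1$ for $y \ge 1$. \textbf{(c) TV shift.} Apply the integral-probability-metric property \eqref{eqn:tv-integral} of $\tv$ to the bounded test function $\exp(\phi_p(\lambda_t(\cdot - \mu(P))))$ to move the expectation from $Q$ to $P$ at additive cost $(p-1/p)\varepsilon$. \textbf{(d) Moment control under $P$.} Using (b) with $x = \lambda_t(X_t - \mu(P))$ and taking expectation under $P$, the linear term vanishes because $\mu(P)$ is the mean, and the $p$-th-power term is bounded by $\lambda_t^p v_p(P)/p \le \lambda_t^p \kappa / p$. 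The analogous claim for $\Nrc_t^p$ follows by noting the odd symmetry $\phi_p(-x) = -\phi_p(x)$ (an easy case check from \eqref{eqn:influence-p}), which converts $\exp(-\phi_p(y))$ into $\exp(\phi_p(-y)) \le 1 - y + |y|^p/p$ and reuses step (d).

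The only genuinely new algebra relative to \cref{lem:rcatsm} is step (b); the rest is mechanical and structurally identical to the $p=2$ proof. Step (b) is also where the assumption $1 < p < 2$ really enters, via $x^p \ge x^2$ on $[0,1]$. Conceptually, the $p$-th absolute central moment takes over the role played by the variance when $p=2$: it is the moment that naturally survives the Catoni pointwise bound, and its known upper bound $\kappa$ supplies precisely enough slack in the denominator to absorb both the TV shift from $Q$ to $P$ and the higher-order term under $P$, pushing the one-step conditional ratio below $1$.
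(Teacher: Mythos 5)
Your proposal is correct and follows essentially the same route as the paper's proof: bound the range of $\exp(\phi_p)$ by $[1/p,p]$, shift the expectation from $Q$ to $P$ via the integral-probability-metric property~\eqref{eqn:tv-integral} at additive cost $(p-1/p)\varepsilon$, apply the pointwise bound $\exp(\phi_p(x)) \le 1+x+|x|^p/p$, and control the resulting moments by $v_p(P)\le\kappa$, with the $\Nrc$ case handled by symmetry. The only addition is your explicit verification of the pointwise bound in step (b), which the paper uses without proof; that verification is sound (including the use of $x^p \ge x^2$ on $[0,1]$ and convexity for the constant pieces) and does not change the argument's structure.
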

Applying Ville's inequality, we can extend \cref{thm:rcs}.
\begin{theorem}\label{thm:rcs-p}
Define $f_{pt}(m) := \sum_{i=1}^t \phi_p( \lambda_i (X_i - m))$. We define ${\CIRC_t}^p (X_1, \dots, X_t)$ to be
\begin{equation}\label{eqn:rcs-p}
  \Bigl\{ m \in \mathbb R : |f_{pt}(m)| \le  \log(2 / \alpha ) + \sum_{i=1}^t \log \left\{ 1 + \lambda_i^p \kappa/ p + (p-1/p) \varepsilon \right\} \Bigr\},
\end{equation}
Then, $\{ {\CIRC_t}^p \}$ is an $(\varepsilon, 1-\alpha)$-RCS for $(\mathcal{M}^p_{\kappa},\mu)$. 
\end{theorem}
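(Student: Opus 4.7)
The plan is to reproduce the proof of \cref{thm:rcs} essentially verbatim, substituting $\Mrc_t \mapsto {\Mrc_t}^p$, $\Nrc_t \mapsto {\Nrc_t}^p$, $\phi \mapsto \phi_p$, $f_t \mapsto f_{pt}$, and replacing the denominator $1 + \lambda_i^2 \sigma^2/2 + 1.5\varepsilon$ by $1 + \lambda_i^p \kappa/p + (p - 1/p)\varepsilon$. All the heavy lifting has already been done: \cref{lem:rcatsm-infvar} supplies the nonnegative supermartingales ${\Mrc_t}^p$ and ${\Nrc_t}^p$ both starting at $1$, and Ville's inequality (\cref{lem:vil}) converts these into time-uniform tail bounds.

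Concretely, I would fix any $P \in \mathcal{M}^p_{\kappa}$ and any corruption $Q \in \tvball(P, \varepsilon)$ and work under $X_i \iid Q$. Applying \cref{lem:vil} to $\{ {\Mrc_t}^p \}$ yields that with probability at least $1-\alpha/2$, we have ${\Mrc_t}^p \le 2/\alpha$ simultaneously for all $t \in \mathbb N^+$. Taking logarithms of~\eqref{eqn:mrc-p} rearranges this event into
\[
f_{pt}(\mu(P)) \le \log(2/\alpha) + \sum_{i=1}^t \log\left( 1 + \lambda_i^p \kappa/p + (p - 1/p)\varepsilon \right)
\]
uniformly over $t$. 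The symmetric argument applied to $\{ {\Nrc_t}^p \}$ gives the matching lower bound $-f_{pt}(\mu(P)) \le \log(2/\alpha) + \sum_{i=1}^t \log\left( 1 + \lambda_i^p \kappa/p + (p - 1/p)\varepsilon \right)$, again uniformly in $t$ with probability at least $1-\alpha/2$. A union bound then combines these into two-sided control of $|f_{pt}(\mu(P))|$ with probability at least $1-\alpha$, which is exactly the membership condition $\mu(P) \in {\CIRC_t}^p$ from~\eqref{eqn:rcs-p}.

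Since the argument is valid for arbitrary $P \in \mathcal{M}^p_{\kappa}$ and $Q \in \tvball(P, \varepsilon)$, the sequence $\{ {\CIRC_t}^p \}$ satisfies \cref{def:rci-rcs} and is an $(\varepsilon, 1-\alpha)$-RCS for $(\mathcal{M}^p_{\kappa}, \mu)$. I anticipate no substantive obstacle in the theorem itself; all the novelty is already absorbed into \cref{lem:rcatsm-infvar}, whose proof must check that $\phi_p$ satisfies both $-\log(1 - x + x^p/p) \le \phi_p(x) \le \log(1 + x + |x|^p/p)$ and $|\phi_p(x)| \le \log p$, so that the $Q$-to-$P$ transfer via~\eqref{eqn:tv-integral} pays the slack $(p - 1/p)\varepsilon$ matching the spread of $\exp(\phi_p) \in [1/p, p]$, and the resulting $P$-expectation is controlled using $v_p(P) \le \kappa$. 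As a sanity check, at $p=2$ one recovers $p - 1/p = 3/2$ and $\kappa/p = \sigma^2/2$, which is consistent with \cref{lem:rcatsm} and \cref{thm:rcs}.
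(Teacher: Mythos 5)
Your proposal is correct and follows essentially the same route as the paper: the paper proves \cref{thm:rcs-p} (jointly with \cref{lem:rcatsm-infvar}) by applying Ville's inequality to each of the supermartingales $\{ {\Mrc_t}^p \}$ and $\{ {\Nrc_t}^p \}$ at level $\alpha/2$ and combining with a union bound, exactly as you describe. Your sanity check at $p=2$ and your note on where the $(p-1/p)\varepsilon$ slack comes from are consistent with the paper's argument.
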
 
This RCS, again, has random widths and satisfies a high probability width bound:
\begin{theorem}\label{thm:tight-infvar} For any $0<\alpha, \delta < 1$ and $0 < \varepsilon \le \frac{p-1}{7p}$, suppose $t \ge \varepsilon^{-1} \log(4/\alpha \delta)$, under any corrupted distribution $Q \in  \tvball(P, \varepsilon)$, the $( \varepsilon, 1-\alpha)$-RCS $\{ {\CIRC_t}^p \}$ with $\lambda_t = (\varepsilon / \kappa)^{1/p}$ for all $t$ satisfies
\begin{equation}\label{eqn:inf-var-bound}
   \Prw_{X_i \iid Q}\left[ \diam({\CIRC_t}^p) \le  \frac{14p}{p-1} \kappa^{1/p} \varepsilon^{(p-1)/p} \right] \ge 1- \delta.
\end{equation}
\end{theorem}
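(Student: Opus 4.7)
The plan is to mirror the proof of \cref{thm:tight}, adapting each argument to use the generalized influence function $\phi_p$ and the $p$-th moment budget $\kappa$. Since $\phi_p$ is nondecreasing, $f_{pt}$ is continuous and strictly decreasing in $m$, so ${\CIRC_t}^p$ is an interval. It therefore suffices to show that both endpoints $m^\pm := \mu(P) \pm r$ are excluded from ${\CIRC_t}^p$ with probability at least $1 - \delta/2$ each, where $r := \frac{7p}{p-1}\kappa^{1/p}\varepsilon^{(p-1)/p}$. By the oddness $\phi_p(-y) = -\phi_p(y)$ and symmetry of the construction, it is enough to treat $m^+$ and establish $f_{pt}(m^+) < -[\log(2/\alpha) + t\log(1+p\varepsilon)]$ with high probability.

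The drift $\Exp_Q \phi_p(\lambda(X - m^+))$ is controlled by three ingredients: the Catoni-style pointwise inequality $\phi_p(y) \le \log(1+y+|y|^p/p) \le y + |y|^p/p$; the moment bound $\Exp_P|X - m^+|^p \le 2^{p-1}(\kappa + r^p)$ via convexity of $|x|^p$; and the TV characterization~\eqref{eqn:tv-integral}, which (since $|\phi_p| \le \log p$) contributes an additive $2\varepsilon\log p$ when passing from $P$ to $Q$. Collecting them gives $\Exp_Q \phi_p(\lambda(X-m^+)) \le -\lambda r + \frac{2^{p-1}\lambda^p(\kappa + r^p)}{p} + 2\varepsilon\log p$. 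Plugging in $\lambda^p = \varepsilon/\kappa$ and $r^p = \bigl(\tfrac{7p}{p-1}\bigr)^p\kappa\varepsilon^{p-1}$, every term becomes a multiple of $\varepsilon$, and the assumption $\varepsilon \le (p-1)/(7p)$ keeps $\bigl(\tfrac{7p}{p-1}\bigr)^p \varepsilon^{p-1}$ of order $1$. The leading $-\lambda r = -\frac{7p}{p-1}\varepsilon$ then dominates all positive contributions as well as the upper threshold $\log(1 + p\varepsilon) \le p\varepsilon$ per step, yielding $\Exp_Q\phi_p + \log(1+p\varepsilon) \le -A_p\varepsilon$ for some strictly positive $A_p$.

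The concentration step uses Bernstein rather than Hoeffding: a naive Hoeffding bound using only $|\phi_p| \le \log p$ would demand $t \gtrsim \varepsilon^{-2}\log(1/\delta)$, but the hypothesis on $t$ scales as $\varepsilon^{-1}\log(1/\delta)$. To run Bernstein I need a sharper variance bound, which follows from the pointwise envelope $\phi_p(y)^2 \le 4|y|^p$ valid for all $y$: for $|y| \le 1$ we have $|\phi_p(y)| \le |y| + |y|^p/p \le 2|y|$ and $|y|^2 \le |y|^p$ since $p \le 2$; for $|y| \ge 1$, $\phi_p(y)^2 \le (\log p)^2 \le 4 \le 4|y|^p$. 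Applying this envelope together with $\lambda^p = \varepsilon/\kappa$ and the $p$-th moment bound on $P$, then transferring to $Q$ via TV, gives $\Var_Q[\phi_p(\lambda(X_i-m^+))] = O(\varepsilon)$. Bernstein then bounds $|f_{pt}(m^+) - \Exp_Q f_{pt}(m^+)|$ by $O(\sqrt{t\varepsilon\log(1/\delta)} + \log(1/\delta))$, which the drift margin $A_p t \varepsilon$ absorbs once $t \ge \varepsilon^{-1}\log(4/\alpha\delta)$.

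Combining drift and deviation, $f_{pt}(m^+) < -\log(2/\alpha) - t\log(1+p\varepsilon)$ with probability at least $1 - \delta/2$; the symmetric argument at $m^-$ uses the companion supermartingale $\{{\Nrc_t}^p\}$, and a union bound over the two sides yields the claimed $1-\delta$ width guarantee. The main technical obstacle is the careful constant-tracking: each of the three ingredients (Catoni/Jensen, TV transfer, Bernstein) generates a $p$-dependent slack, and the precise threshold $\varepsilon \le (p-1)/(7p)$ is exactly what is needed to keep the drift margin $A_p\varepsilon$ safely above the sum of the upper threshold $p\varepsilon$ and the Bernstein deviation, landing the final diameter coefficient at $\frac{14p}{p-1}$.
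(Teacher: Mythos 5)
There is a genuine quantitative gap in your drift step, and it is exactly where the paper has to work hardest. You bound $\Exp_P|X-m^+|^p \le 2^{p-1}(\kappa + r^p)$ by convexity, which puts a factor $2^{p-1}$ on the $r^p$ term. With $\lambda^p=\varepsilon/\kappa$ and $r=\frac{7p}{p-1}\kappa^{1/p}\varepsilon^{(p-1)/p}$ one has $\lambda r=\frac{7p}{p-1}\varepsilon$, and at the boundary of the allowed range, $\varepsilon=\frac{p-1}{7p}$, this equals $1$; your drift bound then reads $-\lambda r+\frac{2^{p-1}}{p}\bigl(\varepsilon+(\lambda r)^p\bigr)+2\varepsilon\log p$, whose leading cancellation is $-1+\frac{2^{p-1}}{p}$, which is \emph{zero} at $p=2$ (concretely, at $p=2$, $\varepsilon=1/14$: $-14\varepsilon+\varepsilon+196\varepsilon^2+2\varepsilon\log 2=(1+2\log 2)\varepsilon>0$). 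So at the claimed radius the drift is not even negative over the full range $\varepsilon\le\frac{p-1}{7p}$, let alone negative by enough to absorb the per-step threshold $\log(1+p\varepsilon)\approx p\varepsilon$ plus a Bernstein deviation of several multiples of $t\varepsilon$ (which is the relevant scale once $t\varepsilon\ge\log(4/\alpha\delta)$). Your closing claim that ``the precise threshold $\varepsilon\le(p-1)/(7p)$ is exactly what is needed'' to land on $\frac{14p}{p-1}$ is therefore not substantiated: with your constants the argument either fails near the boundary or forces a strictly larger radius, and the Bernstein constants (variance envelope $4\cdot 2^{p-1}(\varepsilon+(\lambda r)^p)+2\varepsilon\log^2 p$, plus the bounded-term correction) are never actually checked against the margin.

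The paper's proof avoids both losses. First, instead of the $2^{p-1}$ convexity bound it expands $|X-m|^p=|(X-\mu(P))+(\mu(P)-m)|^p$ via \cref{lem:neurips-expand-lemma}, $|x+y|^p\le|x|^p+4|y|^p+py|x|^{p-1}\sg(x)$, which keeps the coefficient of $|\mu(P)-m|^p$ equal to $1$ (so the cancellation $-\lambda r+\frac{(\lambda r)^p}{p}$ retains a margin of $\frac{p-1}{p}$ at $\lambda r=1$) and dumps the constant-$4$ loss only on the harmless $\kappa$ term, whose contribution $4\lambda^p\kappa/p=4\varepsilon/p$ is lower order; the zero-mean cross term disappears under $P$. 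Second, it does not run a separate Bernstein argument at all: it shows the \emph{displaced} process $M_t^p(m)$ (the Catoni supermartingale recentered at an arbitrary $m$) is itself a nonnegative supermartingale and applies Markov's inequality to $\Exp M_t^p(m)\le 1$, so drift and deviation are handled by a single Chernoff-type bound with no extra variance constant to pay; the endpoint is then obtained by solving the resulting equation in $m$ via \cref{lem:p-poly-zero,lem:p-poly-zero-2} and bounding the constant $\mathfrak C\le 7\varepsilon$ with \cref{lem:linear-bound-at-0-infvar}, yielding the radius $(1+\tfrac{1}{p-1})\cdot 7\,\kappa^{1/p}\varepsilon^{(p-1)/p}$. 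Your overall architecture (control each displaced endpoint, use oddness of $\phi_p$, union bound) matches the paper, but to close the proof you would need to replace the $2^{p-1}$ step by an expansion of the above type and either carry out the Bernstein bookkeeping honestly (accepting a worse constant or a smaller $\varepsilon$ range) or switch to the exponential-supermartingale/Markov route.
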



We defer the above proofs to \proofs. The lower bound from \citet[Theorem 4.4]{bhatt2022minimax} implies that RCIs for $(\mathcal{M}^p_{\kappa}, \mu)$ must, with high probability, have widths at least $\frac{1}{8} \kappa^{1/p} \varepsilon^{(p-1)/p}$. Thus, our width is  optimal up to constant factors.

We remark that in~\eqref{eqn:influence-p}, we set the coefficient of $x^p$ and $|x|^p$ to be $1/p$, as this leads to a succinct, aesthetically pleasing bound~\eqref{eqn:inf-var-bound}. One may also follow the tuning technique by \cite{bhatt2022nearly} in their fixed-time, non-robust setting, to set the coefficient unknown and then optimize over it. This may lead to constant-level improvement.

\subsection{Robust Betting}\label{sec:bet}

We finally demonstrate a sibling case of potential interest, the robust extension of the Kelly betting scheme by \cite{waudby2020estimating} for bounded data. Let $\mathcal{M}_{[0,1]}$ be the set of all distributions on $[0,1]$; the original and corrupted distribution both belong to this class. The following theorem (proved in \proofs) establishes a robust supermartingale analogous to the capital process when betting on $\mu(P)$ (cf.\ \citet[Section 4]{waudby2020estimating}).
\begin{theorem}\label{thm:bet}
    Let $P \in \mathcal{M}_{[0,1]}$ and $Q \in \tvball(P, \varepsilon) \cap \mathcal{M}_{[0,1]}$. Suppose $X_1, X_2,\dots \iid Q$ and each $X_t$ is $\mathcal{F}_t$-measurable. The process $\{ L_t \}$ defined as follows is a supermartingale:
\begin{gather} 
     L_t = \prod_{i=1}^t ( 1 + \lambda_i(X_i  - \mu(P)) -\varepsilon|\lambda_i| ), \ |\lambda_i|\le \frac{1}{1+\varepsilon}. \label{eqn:bet-proc}
\end{gather}
\end{theorem}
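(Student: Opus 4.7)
The plan is to verify the supermartingale property directly by conditioning on $\mathcal{F}_{t-1}$, using predictability of $\lambda_t$ and the integral probability metric representation~\eqref{eqn:tv-integral} of total variation to bound the mean shift $|\mu(Q)-\mu(P)|$ by $\varepsilon$.

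\textbf{Step 1 (Nonnegativity).} I would first observe that each factor in the product $L_t$ is nonnegative, so that the conditional expectations are well-defined. Indeed, since $X_i, \mu(P) \in [0,1]$ we have $|X_i - \mu(P)| \le 1$, and together with the assumption $|\lambda_i| \le (1+\varepsilon)^{-1}$,
\[
1 + \lambda_i(X_i - \mu(P)) - \varepsilon|\lambda_i| \;\ge\; 1 - |\lambda_i| - \varepsilon|\lambda_i| \;=\; 1 - (1+\varepsilon)|\lambda_i| \;\ge\; 0.
\]

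\textbf{Step 2 (Reduction to a mean shift bound).} Since $\lambda_t$ is $\mathcal{F}_{t-1}$-predictable and $L_{t-1}$ is $\mathcal{F}_{t-1}$-measurable, writing $L_t = L_{t-1}\cdot(1+\lambda_t(X_t-\mu(P)) - \varepsilon|\lambda_t|)$ gives
\[
\Exp[L_t \mid \mathcal{F}_{t-1}] = L_{t-1}\bigl(1 + \lambda_t(\mu(Q) - \mu(P)) - \varepsilon |\lambda_t|\bigr).
\]
Hence the supermartingale inequality $\Exp[L_t \mid \mathcal{F}_{t-1}] \le L_{t-1}$ reduces (using $L_{t-1}\ge 0$) to showing
\[
\lambda_t(\mu(Q) - \mu(P)) \;\le\; \varepsilon |\lambda_t|,
\]
which in turn follows from $|\mu(Q)-\mu(P)| \le \varepsilon$.

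\textbf{Step 3 (TV bound on the mean difference).} This is the only substantive step, and it is immediate from the integral probability metric identity~\eqref{eqn:tv-integral}: taking $c_1=0$, $c_2=1$, and the measurable function $f(x)=x$ (which maps $[0,1]$ into $[0,1]$), we have
\[
|\mu(Q) - \mu(P)| = \Bigl|\Expw_{X\sim Q} f(X) - \Expw_{X\sim P} f(X)\Bigr| \;\le\; \tv(P,Q) \;\le\; \varepsilon,
\]
where the last step uses $Q \in \tvball(P,\varepsilon)$ and the fact that both $P$ and $Q$ are supported on $[0,1]$, so $f$ is truly bounded on the supports. Combining Steps 2 and 3 yields $\Exp[L_t \mid \mathcal{F}_{t-1}] \le L_{t-1}$, completing the proof.

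There is really no main obstacle here: everything is a short calculation once one notices that robustness enters only through the mean displacement, and that TV's integral-probability-metric form gives the $\varepsilon$-Lipschitz control on means of functions with range in $[0,1]$. The constraint $|\lambda_i| \le (1+\varepsilon)^{-1}$ plays the same role as the analogous bet-boundedness constraint in the non-robust Kelly scheme of \cite{waudby2020estimating}: it ensures positivity of the capital process, which is what lets us later deploy Ville's inequality on $\{L_t\}$.
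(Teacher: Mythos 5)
Your proof is correct and takes essentially the same route as the paper's: nonnegativity of each factor from $|\lambda_i|\le(1+\varepsilon)^{-1}$ and $|X_i-\mu(P)|\le 1$, then the one-step conditional expectation bounded via the TV integral-probability-metric control $|\mu(Q)-\mu(P)|\le\varepsilon$ on $[0,1]$-supported distributions. If anything, your reduction to $\lambda_t(\mu(Q)-\mu(P))\le\varepsilon|\lambda_t|$ handles negative bets slightly more cleanly than the paper's displayed chain, which is written as though $\lambda_t\ge 0$.
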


Unlike \eqref{eqn:mrc}, the $\varepsilon$-dependent term here (which is like an ``insurance cost'' against corruptions) scales linearly with $\lambda_i$, and does not discount the process if $\lambda_i$ is 0. When testing $\mu(P) = \mu_0$ (replacing $\mu(P)$ with $\mu_0$ in \eqref{eqn:bet-proc}), the trade-off between rate of growth and safeguard against falsely rejecting the null due to corrupted observations is smaller if the ``bet" $\lambda_i$ is smaller. 

\sectionallcap{Conclusion}

In this paper, we derive Huber-robust confidence sequences for the class of distributions with bounded $p$-th central moments, the first Huber-robust CSs we are aware of for any class. These are based on the design of new robust nonnegative supermartingales. 

Our CS matches the width lower bound (up to a constant) and it performs even better than robust nonsequential  (fixed-time) CIs in the literature. As referenced earlier, our methods will enable immediate robustification of downstream applications of confidence sequences, e.g., multi-armed bandits with contaminated distributions, and A/B/n testing within existing experimentation pipelines in multiple companies in the IT industry. 

We hope that followup work can extend our ideas to multidimensional settings, perhaps utilizing the recent reductions presented in~\cite{prasad2020robust} (``A robust univariate mean estimator is all you need''). This does not appear to be straightforward because the aforementioned paper often hides constants (which do not matter for the rate-optimality results they are interested in, but which do matter for achieving the desired level $\alpha$), and it operates in a fixed-sample size setting. It appears that many theoretical and practical details need to be worked out for an exact and practical $(1-\alpha)$-RCIs or RCS to be feasible.

Other problems of potential future interest that this paper brings up include the necessity (or lack thereof) of foreknowledge of the parameter $\varepsilon$,
as well as a further understanding of the betting scheme and especially its $\lambda_i$-proportional robustification cost in \cref{sec:bet}.
Last, given the well known connections between robust statistics and differential privacy~\citep{dwork2009differential}, the privacy implications of this work may be interesting to pursue. Indeed, large-scale private and robust sequential experimentation is certainly of increasing interest in the IT industry.

\bibliography{main.bib}

\newpage



\appendix

\sectionallcap{Additional Theoretical Results and Proofs}\label{sec:pf}

\begin{lemma}\label{lem:dirac-mixture}
For every $\varepsilon \in (0, 1/2)$ and $\sigma > 0$, there exist $P_1, P_2 \in\mathcal{M}^2_{\sigma^2}$ such that
\begin{enumerate}
    \item $P_2 = (1-\varepsilon) P_1 + \varepsilon N$ for some $N \in \mathcal{M}$, and
    \item $|\mu(P_1) - \mu(P_2)| = \sigma \sqrt{\varepsilon}$.
\end{enumerate}
\end{lemma}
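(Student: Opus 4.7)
\textbf{Proof plan for \cref{lem:dirac-mixture}.} The plan is to give an explicit two-point construction. I will take $P_1$ to be a point mass and $P_2$ to be a two-point mixture that differs from $P_1$ only by replacing an $\varepsilon$-fraction of its mass with a Dirac at a carefully chosen location. This matches the hint in the name of the lemma and is the cheapest way to saturate the constraints.

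Concretely, set $P_1 = \delta_0$ and $N = \delta_{a}$ with $a = \sigma/\sqrt{\varepsilon}$, and define $P_2 = (1-\varepsilon) P_1 + \varepsilon N = (1-\varepsilon)\delta_0 + \varepsilon \delta_a$. Condition (1) holds by construction. For condition (2), a direct computation gives $\mu(P_1) = 0$ and $\mu(P_2) = \varepsilon a = \sigma\sqrt{\varepsilon}$, so $|\mu(P_1) - \mu(P_2)| = \sigma\sqrt{\varepsilon}$. It remains to check that both distributions lie in $\mathcal{M}^2_{\sigma^2}$: $P_1$ has variance $0 \le \sigma^2$, while for $P_2$ we compute
\begin{equation*}
v_2(P_2) = \Exp_{X\sim P_2}[X^2] - \mu(P_2)^2 = \varepsilon a^2 - \varepsilon^2 a^2 = \sigma^2(1-\varepsilon) \le \sigma^2,
\end{equation*}
using $\varepsilon < 1/2 < 1$.

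There is no real obstacle here, since both the mean gap and the variance budget can be saturated simultaneously by the two-point mixture above. The only minor subtlety is to confirm that the variance bound is preserved after the shift (the $-\varepsilon^2 a^2$ term, which would have blown up as $\varepsilon \to 0$ without the mean subtraction, is precisely what brings $\varepsilon a^2 = \sigma^2$ down to $\sigma^2(1-\varepsilon)$); this is what fixes the constant in the lower bound and justifies the choice $a = \sigma/\sqrt{\varepsilon}$ rather than any smaller value.
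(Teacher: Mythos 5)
Your construction is exactly the one used in the paper ($P_1 = \delta_0$, $N = \delta_{\sigma/\sqrt{\varepsilon}}$, $P_2 = (1-\varepsilon)P_1 + \varepsilon N$), and your computations of the mean gap $\sigma\sqrt{\varepsilon}$ and the variance $v_2(P_2) = (1-\varepsilon)\sigma^2 \le \sigma^2$ agree with the paper's proof. The proposal is correct and takes essentially the same approach.
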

\begin{proof}[Proof of \cref{lem:dirac-mixture}]
We take $P_1 = \delta_{0}$ and $N = \delta_{\sigma \varepsilon^{-1/2}}$, the Dirac point mass at 0 and $\sigma \varepsilon^{-1/2} $ respectively. The fact that $P_1 \in \mathcal{M}^2_{\sigma^2}$ is obvious. Note that
\begin{equation}
    \mu(P_2) = (1-\varepsilon) \mu(P_1) + \varepsilon\mu(N) = \sigma \varepsilon^{1/2},
\end{equation}
and
\begin{equation}
    v_2(P_2) = (1-\varepsilon) \int (x - \sigma \varepsilon^{1/2})^2 \, \d P_1 + \varepsilon\int (x - \sigma \varepsilon^{1/2})^2 \, \d P_2 = (1-\varepsilon)\sigma^2.
\end{equation}
So $P_2 \in \mathcal{M}^2_{\sigma^2}$, concluding the proof.
\end{proof}

\begin{proof}[Proof of \cref{lem:lb}] Let $P_1$ and $P_2$ be the two distributions in \cref{lem:dirac-mixture}.
First applying the definition of RCS with $P_2 \in \tvball(P_1, \varepsilon)$, we have
\begin{equation}
    \Prw_{X_i \iid P_2} \left[ \forall t \in \mathbb N^+ , \  \mu(P_1) \in \CI_t (X_1, \dots, X_t) \right] \ge 1- \alpha.
\end{equation}
Then applying the definition of an RCS with $P_2 \in \tvball(P_2, \varepsilon)$, we get
\begin{equation}
    \Prw_{X_i \iid P_2} \left[ \forall t \in \mathbb N^+ , \  \mu(P_2) \in \CI_t (X_1, \dots, X_t) \right] \ge 1- \alpha.
\end{equation}
We see that, by a union bound,
\begin{equation}
    \Prw_{X_i \iid P_2} \left[ \forall t \in \mathbb N^+ , \  \{ \mu(P_1), \mu(P_2) \} \subseteq \CI_t (X_1, \dots, X_t) \right] \ge 1- 2 \alpha.
\end{equation}
This implies that
\begin{equation}
    \Prw_{X_i \iid P_2} \left[ \forall t \in \mathbb N^+ , \ \diam(\CI_t (X_1, \dots, X_t)) \ge \sigma \sqrt{\varepsilon} \right] \ge 1- 2 \alpha,
\end{equation}
as claimed. The case for RCIs is entirely analogous, by taking ``$\forall t \in \mathbb N^+$'' outside of ``$\Pr [ \, \cdot \, ]$''.
\end{proof}

\vspace{1em}

\begin{lemma}\label{lem:linear-bound-at-0} On $x \in [0, 1/7]$, we have
\begin{equation}
    0 \le \underbrace{\frac{x}{4} - 2\left( \frac{1}{1 + x/8 + 1.5x} - (1.5x + 1) \right)}_{A(x)} \le  \underbrace{\frac{x}{4} - 2\left( \frac{\exp(-x/4)}{1 +  x/8  + 1.5x} - (1.5x + 1) \right)}_{B(x)} \le 7x \le 1.
\end{equation}
\end{lemma}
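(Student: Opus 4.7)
The lemma is a chain of four inequalities, all involving the expression $1 + x/8 + 1.5x$ in a denominator, which I would begin by consolidating as $1 + 13x/8$. With $y := 13x/8 \in [0, 13/56]$ on the given interval, the ``no-exponential'' expression $A(x)$ simplifies in closed form: after adding the $3x+2$ outside the denominator under a common fraction, I expect
\[
A(x) = 2(1+y) - \frac{2}{1+y} = \frac{2y(y+2)}{1+y},
\]
which is manifestly nonnegative on $y \ge 0$. This dispatches the leftmost inequality $0 \le A(x)$.

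For the middle inequality $A(x) \le B(x)$, I would simply subtract: the $x/4$ and $3x+2$ terms cancel, leaving
\[
B(x) - A(x) = \frac{2(1 - \exp(-x/4))}{1 + 13x/8},
\]
which is nonnegative on $x \ge 0$ because both factors are. The rightmost inequality $7x \le 1$ is immediate from the hypothesis $x \le 1/7$.

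The interesting step is the third, $B(x) \le 7x$. My plan is to rearrange it into
\[
2 - \tfrac{15x}{4} \le \frac{2\exp(-x/4)}{1 + 13x/8},
\]
then clear the positive denominator and expand the polynomial on the left to get the equivalent inequality
\[
2 - \tfrac{x}{2} - \tfrac{195 x^2}{32} \le 2\exp(-x/4),
\]
i.e., $1 - x/4 - 195x^2/64 \le \exp(-x/4)$. At this point the standard bound $\exp(y) \ge 1 + y$ applied with $y = -x/4$ gives $\exp(-x/4) \ge 1 - x/4$, and since $195 x^2/64 \ge 0$, the inequality follows trivially. Note that we never actually use $x \le 1/7$ in step three beyond ensuring all quantities behave reasonably; the bound $x \le 1/7$ only needs to enter the final $7x \le 1$ claim.

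The main obstacle is just keeping the bookkeeping clean: the constants $1/8$, $1.5$, $1/4$ must be combined correctly (in particular $x/8 + 1.5x = 13x/8$) before the closed form for $A(x)$ and the expansion $(2 - 15x/4)(1 + 13x/8) = 2 - x/2 - 195x^2/32$ fall out cleanly. Once those algebraic reductions are done, each of the four inequalities reduces to a one-line application of nonnegativity, the monotonicity $1 - e^{-x/4} \ge 0$, or the universal lower bound $e^y \ge 1 + y$.
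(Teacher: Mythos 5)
Your proposal is correct, and it follows essentially the same route as the paper's proof: the closed form $A(x)=\frac{2y(y+2)}{1+y}$ with $y=13x/8$ is exactly the paper's $\frac{3.25x(1.625x+2)}{1.625x+1}$, the middle step is the same trivial observation, and your reduction of $B(x)\le 7x$ to $2\e^{-x/4}\ge\bigl(2-\tfrac{15x}{4}\bigr)\bigl(1+\tfrac{13x}{8}\bigr)$ followed by $\e^{-x/4}\ge 1-x/4$ and dropping the nonnegative quadratic term is precisely the paper's argument. No gaps.
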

\begin{proof}[Proof of \cref{lem:linear-bound-at-0}]
The first inequality is straightforward when writing $A(x)$ as $ \frac{3.25 x (1.625 x + 2)}{1.625 x + 1}$. The second inequality is trivial. To prove the third inequality, first note that $B(x) = 3.25 x - \frac{2 \exp(-x/4)}{1.625 x  + 1} + 2$; then note that
\begin{equation}
    2 \exp(-x/4) \ge 2 -0.5x \ge 2 - 0.5 x - 3.75\times 1.625 x^2 = (2-3.75 x)(1.625 x + 1),
\end{equation}
which clearly implies $B(x) \le 7x$. The fourth inequality is trivial. 
\end{proof}

\begin{proof}[Proof of \cref{thm:tight}]
Let $X_1, X_2, \dots \iid  Q \in \tvball(P, 
\varepsilon)$.

Define
\begin{align}
    M_t(m) & = \prod_{i=1}^t \frac{ \exp \{ \phi(\lambda_i (X_i - m))   \}}{ 1+ \lambda_i(\mu(P) - m) + \frac{\lambda_i^2}{2} \left(\sigma^2 + (\mu(P) - m)^2 \right)  + 1.5 \varepsilon  } \label{eqn:M-of-m}
    \\ & =  \frac{\prod_{i=1}^t  \exp \{ \phi(\lambda (X_i - m))   \}}{\left( 1+ \lambda(\mu(P) - m) + \frac{\lambda^2}{2} \left(\sigma^2 + (\mu(P) - m)^2 \right)  + 1.5 \varepsilon  \right)^t}.
\end{align}
Note that
\begin{align}
    & \Exp \left[  \frac{ \exp \{ \phi(\lambda_t (X_t - m))  \}}{  1+ \lambda_t(\mu(P) - m) + \frac{\lambda_t^2}{2} \left(\sigma^2 + (\mu(P) - m)^2 \right)  + 1.5 \varepsilon  } \middle \vert \mathcal{F}_{t-1} \right]
    \\
    = & \frac{\Expw_{X_t \sim  Q}\left[ \exp \{ \phi(\lambda_t (X_t - m))  \} \right]  }{  1+ \lambda_t(\mu(P) - m) + \frac{\lambda_t^2}{2} \left(\sigma^2 + (\mu(P) - m)^2 \right)  + 1.5 \varepsilon  }
    \\
    \le & \frac{\Expw_{X_t \sim P }\left[ \exp \{ \phi(\lambda_t (X_t - m)) \}  \right] + 1.5 \varepsilon  }{ 1+ \lambda_t(\mu(P) - m) + \frac{\lambda_t^2}{2} \left(\sigma^2 + (\mu(P) - m)^2 \right)  + 1.5 \varepsilon   }
    \\
    \le & \frac{ \Expw_{X_t \sim P }\left[  1 + \lambda_t (X_t - m) + \lambda_t^2 (X_t - m)^2/2  \right] + 1.5 \varepsilon }{  1+ \lambda_t(\mu(P) - m) + \frac{\lambda_t^2}{2} \left(\sigma^2 + (\mu(P) - m)^2 \right)  + 1.5 \varepsilon  }
    \\
    = & \frac{ (1 + \lambda_t(\mu(P) - m) + \frac{\lambda_t^2}{2} \left(v_2(P) + (\mu(P) - m)^2)\right) + 1.5 \varepsilon  }{ 1+ \lambda_t(\mu(P) - m) + \frac{\lambda_t^2}{2} \left(\sigma^2 + (\mu(P) - m)^2 \right)  + 1.5 \varepsilon  }
    \\
    \le & \frac{(1 + \lambda_t(\mu(P) - m) + \frac{\lambda_t^2}{2} \left(\sigma^2 + (\mu(P) - m)^2)\right) + 1.5 \varepsilon  }{ 1+ \lambda_t(\mu(P) - m) + \frac{\lambda_t^2}{2} \left(\sigma^2 + (\mu(P) - m)^2 \right)  + 1.5 \varepsilon  }
   = 1.
\end{align}
Hence, $\{ M_t(m) \}$ is a supermartingale under $X_1, X_2, \dots \iid  Q$. We remark that $M_t(\mu(P))$ is just $\Mrc_t$ and when $\varepsilon = 0$ it is reduced to the non-robust case in our last paper.

Now that $\{ M_t(m) \}$ is a supermartingale with $M_0(m) = 1$, we see that $\Exp M_t(m) \le 1$; that is,
\begin{equation}
    \Exp \exp (f_t(m)) \le \left( 1+ \lambda(\mu(P) - m) + \frac{\lambda^2}{2} \left(\sigma^2 + (\mu(P) - m)^2 \right)  + 1.5 \varepsilon  \right)^t.
\end{equation}
Define the function
\begin{equation}
    B^+_t(m) = t \log \left(  1+ \lambda(\mu(P) - m) + \frac{\lambda^2}{2} \left(\sigma^2 + (\mu(P) - m)^2 \right)  + 1.5 \varepsilon  \right) + \log(2/\delta).
\end{equation}
By Markov's inequality,
\begin{equation}
    \forall m \in \mathbb R, \; \Pr[ f_t (m) \le B^+_t(m) ] \ge 1- \delta/2.
\end{equation}
Let $m = \pi_t$ be the smaller solution (whose existence is discussed soon) to the following \emph{quadratic} equation
\begin{equation}\label{eqn:upper-est}
    B_t^+(m) =  - \log(2/\alpha) - t \log (  1 + \lambda^2 \sigma^2 / 2 + 1.5 \varepsilon ).
\end{equation}
Then
\begin{equation}
     \Pr[f_t (\pi_t) \le  - \log(2/\alpha) - t \log (  1 + \lambda^2 \sigma^2 / 2 + 1.5 \varepsilon ) ] \ge 1 - \delta/2,
\end{equation}
\begin{equation}
    \Pr[\up(\CIRC_t) \le \pi_t] \ge 1 - \delta/2.
\end{equation}

Let us see how $\pi_t$ can exist. $\pi_t$ is a solution to a \emph{quadratic} equation. Hence it has closed-form expressions. Consider the equation~\eqref{eqn:upper-est},
\begin{equation}\label{eqn:upper-est-sorted}
   -\lambda(m- \mu(P)) + \frac{\lambda^2}{2} \left(\sigma^2 + (m- \mu(P))^2)\right)
   =  \left( \frac{4}{\alpha \delta} \right)^{-1/t} \frac{1}{1 + \lambda^2 \sigma^2 / 2 + 1.5 \varepsilon}   - (1+1.5 \varepsilon ).
\end{equation}
It has roots if and only if
\begin{equation}
    1 - \left( \lambda^2 \sigma^2 - 2 \left( \left( \frac{4}{\alpha \delta} \right)^{-1/t} \frac{1}{1 + \lambda^2 \sigma^2 / 2 + 1.5 \varepsilon}   - (1+1.5 \varepsilon ) \right) \right) \ge 0.
\end{equation}
Recall that we assume the following:
\begin{enumerate}
    \item $t > 4 \varepsilon^{-1} \log (4/\alpha \delta)  $, consequently $\e^{-\varepsilon/4} < \left( \frac{4}{\alpha \delta} \right)^{-1/t} < 1$,
    \item $\lambda^2 = \frac{\varepsilon}{4 \sigma^2}$,
    \item $0 < \varepsilon \le 1/7$.
\end{enumerate}
Hence we can apply \cref{lem:linear-bound-at-0} on $\varepsilon \in [0, 1/7]$ to get
\begin{equation}
    0 \le  \lambda^2 \sigma^2 - 2 \left( \left( \frac{4}{\alpha \delta} \right)^{-1/t} \frac{1}{1 + \lambda^2 \sigma^2 / 2 + 1.5 \varepsilon}   - (1+1.5 \varepsilon ) \right)  \le  7\varepsilon \le 1.
\end{equation}
So the smaller root exists and it satisfies
\begin{align}
    \pi_t &= \mu(P) + \frac{1 - \sqrt{ 1 - \left( \lambda^2 \sigma^2 - 2 \left( \left( \frac{4}{\alpha \delta} \right)^{-1/t} \frac{1}{1 + \lambda^2 \sigma^2 / 2 + 1.5 \varepsilon}   - (1+1.5 \varepsilon ) \right) \right) }}{\lambda}
    \\
    & \le \mu(P) + \frac{1 - \left[1 - \left(\lambda^2 \sigma^2 - 2 \left( \left( \frac{4}{\alpha \delta} \right)^{-1/t} \frac{1}{1 + \lambda^2 \sigma^2 / 2 + 1.5 \varepsilon}   - (1+1.5 \varepsilon ) \right)  \right) \right]}{\lambda}
    \\
    & = \mu(P) +  \frac{   \lambda^2 \sigma^2 - 2 \left( \left( \frac{4}{\alpha \delta} \right)^{-1/t} \frac{1}{1 + \lambda^2 \sigma^2 / 2 + 1.5 \varepsilon}   - (1+1.5 \varepsilon ) \right) }{\lambda}
    \\
    & \le \mu(P) + \frac{7\varepsilon}{\lambda}
    \\
    & = \mu(P) + 14 \sigma \sqrt{\varepsilon}.
\end{align}
We see that
\begin{equation}\label{eqn:first-boxed}
  \boxed{  \Pr[\up(\CIRC_t) \le  \mu(P) + 14 \sigma \sqrt{\varepsilon}] \ge 1 - \delta/2.}
\end{equation}

\vspace{0.25in}

Similarly, $\{ N_t(m) \}$ is a supermartingale under $X_1, X_2, \dots \iid  Q$, where
\begin{align}
     N_t(m) & = \prod_{i=1}^t \frac{ \exp \{ -\phi(\lambda_i (X_i - m))   \}}{ 1  - \lambda_i(\mu(P) - m) + \frac{\lambda_i^2}{2} \left(\sigma^2 + (\mu(P) - m)^2)\right)  + 1.5 \varepsilon  }
    \\ & =  \frac{\prod_{i=1}^t  \exp \{ -\phi(\lambda (X_i - m))   \}}{\left( 1  - \lambda(\mu(P) - m) + \frac{\lambda^2}{2} \left(\sigma^2 + (\mu(P) - m)^2)\right)  + 1.5 \varepsilon \right)^t}.
\end{align}
Define the function
\begin{equation}\label{eqn:lower-est}
    B^-_t(m) = -t \log \left(  1   -\lambda(\mu(P) - m) + \frac{\lambda^2}{2} \left(\sigma^2 + (\mu(P) - m)^2)\right)  + 1.5 \varepsilon \right) - \log(2/\delta).
\end{equation}
By Markov's inequality,
\begin{equation}
    \forall m \in \mathbb R, \; \Pr[ f_t (m) \ge B^-_t(m) ] \ge 1- \delta/2.
\end{equation}
Let $m = \rho_t$ be the larger solution to the following equation
\begin{equation}
    B_t^-(m) =   \log(2/\alpha) + t  \log \{  1 + \lambda^2 \sigma^2 / 2 + 1.5 \varepsilon \}.
\end{equation}
Then
\begin{equation}
    \Pr[\lw(\CIRC_t) \ge \rho_t] \ge 1 - \delta/2.
\end{equation}
Now, consider the equation~\eqref{eqn:lower-est},
\begin{equation}\label{eqn:lower-est-sorted}
    -\lambda(\mu(P)-m)+\frac{\lambda^2}{2}( \sigma^2 +  (\mu(P)-m)^2 ) =   \left( \frac{4}{\alpha \delta} \right)^{-1/t} \frac{1}{1 + \lambda^2 \sigma^2 / 2 + 1.5 \varepsilon}   - (1+1.5 \varepsilon) .
\end{equation}
The obvious isometry ($m-\mu(P) \leftrightarrow \mu(P) - m$) between~\eqref{eqn:upper-est-sorted} and~\eqref{eqn:lower-est-sorted} yields that $\rho_t \ge \mu(P) - 16\sigma \sqrt{\varepsilon}$. Therefore, we see that
\begin{equation}\label{eqn:second-boxed}
  \boxed{  \Pr[\lw(\CIRC_t) \ge  \mu(P) - 14 \sigma \sqrt{\varepsilon}] \ge 1 - \delta/2.}
\end{equation}
Combining two boxed inequalities via a union bound, we complete the proof.
\end{proof}

\begin{proof}[Proof of \cref{lem:repl-obliv}]
For any $P\in\mathcal{P}$ and any $Q \in \tvball(P, \varepsilon)$, consider $Z_1, \dots, Z_t \iid Q$. Recall that the TV metric between $P$ and $Q$ always equals the minimum of $\Prw_{(X,Y)\sim (P,Q) }[X \neq Y]$ over all coupling $(X,Y)$ with marginals $P$ and $Q$ respectively \citep{gibbs2002choosing}. Take the coupling $(P, Q)$ such that $\Prw_{(X,Y)\sim (P,Q) }[X \neq Y] = \tv(P, Q) \le \varepsilon$. Let $(X_1, Y_1), \dots, (X_t, Y_t) \iid (P,Q)$. Then,
\begin{align}
    & \Prw_{Z_i \iid Q} \left[ \chi(P) \in \CI_t(Z_1,\dots, Z_t) \right]
    \\
    = & \Prw_{(X_i, Y_i) \iid (P,Q)} \left[ \chi(P) \in \CI_t(Y_1,\dots, Y_t) \right]
    \\
    \ge & \Prw_{(X_i, Y_i) \iid (P,Q)} \left[ \exists c_0 \in \mathcal R_{2\varepsilon, t},  (Y_1,\dots,Y_t)=c_0(X_1,\dots,X_t) \text{ and }  \chi(P) \in \CI_t(c_0(X_1,\dots, X_t)) \right]
    \\
    \ge & \Prw_{(X_i, Y_i) \iid (P,Q)} \left[ \exists c_0 \in \mathcal R_{2\varepsilon, t},  (Y_1,\dots,Y_t)=c_0(X_1,\dots,X_t)\right] +  \Prw_{X_i \iid P} \left[ \forall c \in \mathcal R_{2\varepsilon, t}, \ \chi(P) \in \CI_t(c(X_1,\dots, X_t)) \right] - 1
    \\
    &\text{(bounding the first term by Hoeffding's inequality)}
    \\
    \ge & (1-\e^{-2t\varepsilon^2}) + (1-\alpha) - 1 = 1 - (\alpha + \e^{-2t\varepsilon^2}).
\end{align}
This completes the proof.
\end{proof}

\begin{proof}[Proof of \cref{cor:growth}] Recall the definition of $\{M_t(m)\}$ back in~\eqref{eqn:M-of-m}. Note that $\min(\CIRC_t)$ is the solution for $m$ of the equation $ M_t(m) = 1/\alpha $.

Under the assumptions of \cref{thm:tight},
inequality~\eqref{eqn:second-boxed} holds. Therefore, if $\mu_0 < \mu(P) - 14\sigma \sqrt{\varepsilon}$, $ \Pr[  \min(\CIRC_t) > \mu_0 ] \ge 1-  \delta/2$. Note that $M_t(m)$ is a decreasing function of $m$. This implies that $\Pr[ M_t(\mu_0) > 1/\alpha ] \ge 1 - \delta /2$.

Let us use the smallest $\alpha$ that satisfies the
assumption $t \ge 4 \varepsilon^{-1} \log(4/\alpha \delta)$ of \cref{thm:tight}. This would give
\begin{equation}
 \Prw_{X_i \sim Q} \left[  M_t(\mu_0) > \frac{\delta}{4} \exp\left(\frac{t \varepsilon}{4}\right) \right] \ge 1- \delta/2.
\end{equation}
The proof is complete as $M_t(\mu_0)$ is just the $\Mrc_t$ in \cref{cor:growth}.
\end{proof}

\begin{proof}[Proof of \cref{lem:rcatsm-infvar} and \cref{thm:rcs-p}]
Since $|\phi_p(x)| \le \log p$, we have $ 1/p \le \exp \{ \phi(\lambda_t (x - \mu(P))) \} \le p$. Note that
\begin{align}
    & \Exp \left[ \frac{ \exp \{ \phi_p(\lambda_t (X_t - \mu(P)))  \}}{ 1 + \lambda_t^p \kappa / p + (p-1/p) \varepsilon } \middle \vert \mathcal{F}_{t-1} \right]
    \\
     = & \frac{\Expw_{X_t \sim Q}\left[ \exp \{ \phi_p(\lambda_t (X_t - \mu(P))) \} \right]}{  1 +\lambda_t^p \kappa / p + (p-1/p) \varepsilon  }
    \\
      \le & \frac{ \Expw_{X_t \sim P }\left[ \exp \{ \phi_p(\lambda_t (X_t - \mu(P))) \}  \right] + (p-1/p) \varepsilon  }{  1 +\lambda_t^p \kappa / p + (p-1/p) \varepsilon  }
    \\
    \le & \frac{\Expw_{X_t \sim P }\left[  1 + \lambda_t (X_t - \mu(P)) + \lambda_t^p |X_t - \mu(P)|^p/p  \right] + (p-1/p)\varepsilon  }{  1 +\lambda_t^p \kappa / p + (p-1/p) \varepsilon  }
    \\
    = & \frac{ \Expw_{X_t \sim P }\left[  1 + \lambda_t^p |X_t - \mu(P)|^p/p  \right] + (p-1/p) \varepsilon  }{  1 +\lambda_t^p \kappa / 2 + (p-1/p) \varepsilon }
    \\
    = & \frac{ (1 + \lambda_t^p v_p(P)/p) + (p-1/p)  \varepsilon  }{ 1 +\lambda_t^p \kappa / p + (p-1/p)  \varepsilon  }  \le 1.
\end{align}
The first inequality above is due to~\eqref{eqn:tv-integral} and $Q \in \tvball(P, \varepsilon)$. Hence $\{ {\Mrc_t}^p \}$ is a supermartingale. The proof for $\{ {\Nrc_t}^p \}$ is analogous. Now apply Ville's inequality (\cref{lem:vil}) to $\{ {\Mrc_t}^p \}$. With probability at least $1-\alpha/2$, we have that $ \forall t \in \mathbb N^+$, 
\begin{equation}
    f_{pt}(\mu(P)) \le \sum_{i=1}^t \log\{ 1+ \lambda_i^p\kappa / p +(p-1/p)\varepsilon \} + \log(2/\alpha). 
\end{equation}
And to $\{ {\Nrc_t}^p \}$,
\begin{equation}
    -f_{pt}(\mu(P)) \le \sum_{i=1}^t \log\{ 1+ \lambda_i^p\kappa / p +(p-1/p)\varepsilon \} + \log(2/\alpha). 
\end{equation}
A union of the above two bounds concludes the proof.
\end{proof}

\vspace{1em}

\begin{lemma}\label{lem:p-poly-zero}
     Let $p \in (1,2]$, $C > 0$, and $
    g(y) = y^p - y + C$.  Suppose there is a $c > 0$ such that $C = \left(\frac{c}{(1+c)^p} \right)^{1/(p-1)}$. Then $g((1+c)C)=0$.
\end{lemma}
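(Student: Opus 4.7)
The plan is to verify $g((1+c)C) = 0$ by direct substitution, using the defining relation $C^{p-1} = \frac{c}{(1+c)^p}$ that follows from raising the hypothesis on $C$ to the power $p-1$.

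First I would compute
\begin{equation}
    g((1+c)C) = (1+c)^p C^p - (1+c)C + C = (1+c)^p C^p - cC,
\end{equation}
where the last equality comes from combining the two non-$p$-th-power terms. Then I would factor out $C$ to write this as $C\bigl[(1+c)^p C^{p-1} - c\bigr]$.

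The final step is to substitute $C^{p-1} = c/(1+c)^p$, which is just the hypothesis $C = \bigl(c/(1+c)^p\bigr)^{1/(p-1)}$ raised to the $(p-1)$-th power (valid since $p > 1$ makes this exponent well-defined and both sides positive). This makes the bracket $(1+c)^p \cdot c/(1+c)^p - c = 0$, yielding $g((1+c)C) = 0$ as claimed. There is no real obstacle here; the lemma is essentially a substitution identity designed so that $(1+c)C$ is a root of $g$.
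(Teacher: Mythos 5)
Your proof is correct and is exactly the direct substitution the paper alludes to (the paper omits the proof, noting it ``is checked by direct substitution''): expanding $g((1+c)C)=C\bigl[(1+c)^p C^{p-1}-c\bigr]$ and using $C^{p-1}=c/(1+c)^p$ gives zero. Nothing further is needed.
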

The above lemma is checked by direct substitution, so the proof is omitted.

\begin{lemma}\label{lem:p-poly-zero-2}
    Let $p \in (1,2]$, $A,B,C>0$, and $
    g(x) = Ax^p - Bx + C$. Suppose there is a $c>0$ such that $ A^{1/(p-1)} B^{-p/(p-1)} C =  \left(\frac{c}{(1+c)^p} \right)^{1/(p-1)}$.
    Then $g$ has a positive zero $(1+c) B^{-1} C $.
\end{lemma}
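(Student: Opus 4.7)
The plan is to reduce $g(x) = Ax^p - Bx + C$ to the normalized form $y^p - y + C'$ handled by \cref{lem:p-poly-zero}, via two successive rescalings of the variable. This is a purely algebraic manipulation: no inequalities or estimates are needed, so there is no real obstacle beyond keeping track of exponents.

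First I would substitute $x = (C/B)\, y$. Then
\begin{equation}
    g(x) = A\,(C/B)^p\, y^p - C\, y + C = C\bigl( A C^{p-1} B^{-p}\, y^p - y + 1 \bigr),
\end{equation}
so the zeros of $g$ in $x$ correspond to the zeros of $h(y) := A C^{p-1} B^{-p}\, y^p - y + 1$ in $y$ (with $x = (C/B)y$).

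Next I would rescale $y$ to absorb the leading coefficient. Set $\alpha := A^{1/(p-1)} C\, B^{-p/(p-1)}$, which is exactly the quantity appearing in the hypothesis. Write $y = z/\alpha$. Multiplying the resulting equation $h(z/\alpha) = 0$ through by $\alpha$ yields
\begin{equation}
    z^p - z + \alpha = 0,
\end{equation}
because $A C^{p-1} B^{-p} \alpha^{1-p} = 1$ by the definition of $\alpha$. The hypothesis now reads $\alpha = \bigl( c/(1+c)^p \bigr)^{1/(p-1)}$, which is precisely the hypothesis of \cref{lem:p-poly-zero}.

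Applying \cref{lem:p-poly-zero} gives the positive root $z = (1+c)\alpha$. Unwinding the substitutions, $y = z/\alpha = 1+c$, and hence $x = (C/B) y = (1+c) B^{-1} C$, which is the claimed positive zero of $g$.
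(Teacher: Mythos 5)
Your proof is correct and is essentially the paper's argument: your two rescalings compose to the single substitution $x = (B/A)^{1/(p-1)}z$ used in the paper, which likewise reduces $g$ to the normalized equation $z^p - z + A^{1/(p-1)}B^{-p/(p-1)}C = 0$ and then invokes \cref{lem:p-poly-zero}. The exponent bookkeeping checks out, so nothing further is needed.
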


\begin{proof}[Proof of \cref{lem:p-poly-zero-2}]
    Substituting $x = (B/A)^{1/(p-1)}y$, the equation reads
    \begin{equation}
      y^p - y +  A^{1/(p-1)} B^{-p/(p-1)} C = 0.
    \end{equation}
    So there is a solution $ y_0 = (1+c)A^{1/(p-1)} B^{-p/(p-1)} C$ due to \cref{lem:p-poly-zero}. Correspondingly $ x_0=   (B/A)^{1/(p-1)} \cdot (1+c)A^{1/(p-1)} B^{-p/(p-1)} C = (1+c) B^{-1}C$.
\end{proof}

\begin{lemma}[One-dimensional special case of  Lemma 7 of \cite{wang2021convergence}, Appendix A]\label{lem:neurips-expand-lemma} Let $p \in (1,2]$. For any real $x$ and $y$, $|x+y|^p \le |x|^p + 4|y|^p + py|x|^{p-1} \sg(x)$.
\end{lemma}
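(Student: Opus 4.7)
The plan is to prove the inequality by first performing a symmetry reduction and then a three-way case split on a single scalar parameter. Both sides of $|x+y|^p \le |x|^p + 4|y|^p + py|x|^{p-1}\sg(x)$ are invariant under the joint sign flip $(x,y) \mapsto (-x,-y)$, so I would assume without loss of generality that $x \ge 0$. The boundary case $x=0$ is immediate as it reduces to $|y|^p \le 4|y|^p$. For $x > 0$, dividing by $x^p$ and writing $r = y/x$ reduces the claim to the single-variable inequality
\begin{equation}
    |1+r|^p \le 1 + 4|r|^p + pr, \qquad r \in \mathbb{R}, \ p \in (1,2].
\end{equation}
I would then split into three regimes: $r \ge 0$, $-1 \le r < 0$, and $r < -1$.

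For $r \ge 0$, define $h(r) := 1 + 4r^p + pr - (1+r)^p$. Since $h(0)=0$, it suffices to verify $h'(r) = 4pr^{p-1} + p - p(1+r)^{p-1} \ge 0$. This follows from the subadditivity bound $(1+r)^{p-1} \le 1 + r^{p-1} \le 1 + 4r^{p-1}$ valid for $p-1 \in (0,1]$ and $r \ge 0$. For $r < -1$, set $u = |r| > 1$; the claim becomes $(u-1)^p \le 1 + 4u^p - pu$. Since $(u-1)^p \le u^p$ we have $4u^p - (u-1)^p \ge 3u^p \ge 3u \ge pu$ (using $u^p \ge u$ for $u\ge 1$ and $p \le 2$), which gives the needed bound with plenty of slack.

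The delicate case is $-1 \le r < 0$. Setting $s = -r \in (0,1]$ reduces it to showing
\begin{equation}
    \psi(s) := 4s^p + 1 - ps - (1-s)^p \ge 0, \qquad s \in [0,1].
\end{equation}
I would check $\psi(0) = 0$ and then prove $\psi'(s) \ge 0$ on $[0,1]$. Computing $\psi'(s) = 4p s^{p-1} + p(1-s)^{p-1} - p$ and using the inequality $(1-s)^{p-1} \ge 1-s$ (which holds because $z^{p-1} \ge z$ for $z \in [0,1]$ and $p-1 \le 1$) yields the lower bound $\psi'(s) \ge ps(4s^{p-2} - 1)$. It remains to verify that $4s^{p-2} \ge 1$ throughout $(0,1]$ for $p \in (1,2]$; this is equivalent to $s \le 4^{1/(2-p)}$, and since $4^{1/(2-p)} \ge 4 > 1$ for every such $p$, it always holds.

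The main obstacle I anticipate is this last case. Naive Taylor expansion around $r=0$ produces a remainder involving $(1-s)^{p-2}$, which blows up as $s \to 1$; and the direct bound $(1-s)^p \le 1-s$ only controls $\psi$ up to a term $(p-1)s$, which is not dominated by $4s^p$ for small $s$. Differentiating first and using the pair of elementary facts $(1-s)^{p-1} \ge 1-s$ and $4s^{p-2} \ge 1$ on $(0,1]$ are what makes the constant $4$ work without any sharpness being wasted.
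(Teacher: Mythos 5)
Your proof is correct, but it is genuinely different in character from what the paper does: the paper offers no proof of this lemma at all, simply citing it as the one-dimensional special case of Lemma 7 in the appendix of \cite{wang2021convergence}, where it is established as (a consequence of) a multivariate inequality of the form $\|x+y\|^p \le \|x\|^p + 4\|y\|^p + p\langle y, x\rangle\|x\|^{p-2}$. Your argument is a self-contained elementary verification of exactly the scalar case the paper needs: the sign-flip symmetry $(x,y)\mapsto(-x,-y)$ correctly reduces to $x\ge 0$ (the term $py|x|^{p-1}\sg(x)$ is invariant), the normalization to $|1+r|^p \le 1+4|r|^p+pr$ is valid for $x>0$, and all three regimes check out: for $r\ge 0$ the subadditivity $(1+r)^{p-1}\le 1+r^{p-1}$ gives $h'\ge 0$; for $r<-1$ the crude bound $(u-1)^p\le u^p$ together with $3u^p\ge pu$ suffices; and in the delicate regime the chain $\psi'(s)\ge 4ps^{p-1}-ps = ps(4s^{p-2}-1)\ge 0$ is sound, since $s^{p-2}\ge 1$ on $(0,1]$ for $p\le 2$ (your rewriting as $s\le 4^{1/(2-p)}$ implicitly assumes $p<2$, but at $p=2$ the inequality $4s^{p-2}=4\ge 1$ is immediate, so nothing is lost). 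What the citation buys the paper is brevity and access to the general vector-valued statement; what your route buys is a short, checkable, dimension-one proof with no external dependency, which could reasonably replace the citation if self-containment were desired.
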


\begin{lemma}\label{lem:linear-bound-at-0-infvar} For any $p \in (1,2]$ and $x > 0$, we have
\begin{equation}
   0 < \underbrace{ 1 + (p+3/p) x - \frac{1}{1 + p x}  }_{A(p,x)} <  1 + (p+3/p) x - \e^{-x} \frac{1}{1 + p x}  < 7x.
\end{equation}
\end{lemma}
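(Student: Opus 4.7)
\textbf{Proof plan for \cref{lem:linear-bound-at-0-infvar}.} The three inequalities are essentially independent, and I would handle them separately.

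The first inequality $0 < A(p,x)$ is equivalent, after multiplying through by $1 + px > 0$, to $(1+px)\bigl(1 + (p+3/p)x\bigr) > 1$, and expanding the left-hand side yields $1 + (2p + 3/p)x + (p^2 + 3)x^2$, which visibly exceeds $1$ for $x > 0$ since all of the additional terms are strictly positive.

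The middle inequality $A(p,x) < 1 + (p+3/p)x - e^{-x}/(1+px)$ is immediate from a direct subtraction: the difference equals $\dfrac{1 - e^{-x}}{1+px}$, which is strictly positive for $x>0$.

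The bound $1 + (p+3/p)x - e^{-x}/(1+px) < 7x$ is the substantive part. Writing $c(p) := 7 - p - 3/p$, it is equivalent to $(1+px)\bigl(1 - c(p)x\bigr) < e^{-x}$. Expanding the left side gives $1 - (c(p)-p)x - p\,c(p)\,x^2$, which I would re-group as $(1 - x) - (c(p) - p - 1)x - p\,c(p)\,x^2$. The plan is then to combine the elementary estimate $1 - x < e^{-x}$ (valid for all $x > 0$) with the claim $c(p) - p - 1 = 6 - 2p - 3/p \ge 0$ for $p \in (1,2]$; the latter is the quadratic inequality $2p^2 - 6p + 3 \le 0$, whose roots $(3 \pm \sqrt{3})/2 \approx 0.63,\, 2.37$ sandwich the interval $(1,2]$. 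Granting both, one chains $(1+px)(1-c(p)x) = (1-x) - \underbrace{(c(p)-p-1)x}_{\ge 0} - \underbrace{p\,c(p)\,x^2}_{\ge 0} \le 1-x < e^{-x}$, which gives the desired strict inequality.

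The only conceivable obstacle is the verification of $c(p) - p - 1 \ge 0$ on $(1,2]$; this is purely a one-variable quadratic check and cannot really go wrong. Everything else reduces to the single classical bound $1 - x < e^{-x}$ for $x > 0$ and to routine algebraic rearrangements.
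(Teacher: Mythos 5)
Your proof is correct and follows essentially the same route as the paper's: both reduce the third inequality to $(1+px)\bigl(1-(7-p-3/p)x\bigr) < \e^{-x}$, discard the nonpositive quadratic contribution, use the bound $2p+3/p \le 6$ (your check $2p^2-6p+3\le 0$ on $(1,2]$) to get down to $1-x$, and finish with $1-x < \e^{-x}$, while the first two inequalities are handled by the same elementary clearing of denominators. No gaps.
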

\begin{proof}[Proof of \cref{lem:linear-bound-at-0-infvar}]
The first inequality is straightforward when writing $A(p,x)$ as $ \frac{ x (p^3 x + 2 p^2 +3px + 3)}{p(px+1)}$. The second inequality is trivial. The third inequality is equivalent to $\e^{-x} > (1+px)(1+(p+3/p-7)x) $. Since $p+3/p-7 < 0$ and $2p+3/p < 6$, we have $(1+px)(1+(p+3/p-7)x)  < 1 + (2p+3/p-7)x < 1 - x < \e^{-x}$.
\end{proof}

\vspace{2em}

\begin{proof}[Proof of \cref{thm:tight-infvar}]
Define
\begin{align}
    M_t^p(m) = \frac{\prod_{i=1}^t  \exp \{ \phi_p(\lambda_i (X_i - m))   \}}{ \prod_{i=1}^t \left( 1+ \lambda_i(\mu(P) - m) + \frac{\lambda_i^p}{p} \left(4\kappa + |\mu(P) - m|^p \right)  + (p-1/p) \varepsilon  \right)}.
\end{align}
Then
\begin{align}
    &\Exp\left[ \frac{  \exp \{ \phi_p(\lambda_t (X_t - m))   \}}{ 1+ \lambda_t(\mu(P) - m) + \frac{\lambda_t^p}{p} \left(4\kappa + |\mu(P) - m|^p \right)  + (p-1/p) \varepsilon } \middle \vert \mathcal{F}_{t-1}  \right]
    \\
    = & \frac{  \Expw_{X_t \sim Q} [\exp \{ \phi_p(\lambda_t (X_t - m))   \}]}{ 1+ \lambda_t(\mu(P) - m) + \frac{\lambda_t^p}{p} \left(4\kappa + |\mu(P) - m|^p \right)  + (p-1/p) \varepsilon }
    \\
    \le & \frac{  \Expw_{X_t \sim P} [\exp \{ \phi_p(\lambda_t (X_t - m))   \}] + (p-1/p)\varepsilon}{ 1+ \lambda_t(\mu(P) - m) + \frac{\lambda_t^p}{p} \left(4\kappa + |\mu(P) - m|^p \right)  + (p-1/p) \varepsilon }
    \\
    \le &  \frac{  \Expw_{X_t \sim P} [1 + \lambda_t (X_t - m) + \lambda_t^p |X_t - m|^p/p] + (p-1/p)\varepsilon}{ 1+ \lambda_t(\mu(P) - m) + \frac{\lambda_t^p}{p} \left(4\kappa + |\mu(P) - m|^p \right)  + (p-1/p) \varepsilon }
    \\
    & \text{(by \cref{lem:neurips-expand-lemma})}
    \\
    \le &  \tfrac{  \Expw_{X_t \sim P} [1 + \lambda_t (X_t - m) + (\lambda_t^p/p)( |\mu(P) - m|^p + 4|X_t - \mu(P)|^p + p(X_t - \mu(P))|\mu(P) - m|^{p-1} \sg(\mu(P) - m) ) ] + (p-1/p)\varepsilon}{ 1+ \lambda_t(\mu(P) - m) + \frac{\lambda_t^p}{p} \left(4\kappa + |\mu(P) - m|^p \right)  + (p-1/p) \varepsilon } 
    \\
    \le & \frac{1 + \lambda_t(\mu(P) - m) + (\lambda_t^p/p) ( |\mu(P) - m|^p +  4\kappa  ) + (p-1/p)\varepsilon  }{ 1+ \lambda_t(\mu(P) - m) + \frac{\lambda_t^p}{p} \left(4\kappa + |\mu(P) - m|^p \right)  + (p-1/p) \varepsilon} = 1.
\end{align}

Thus $\{ M_t^p(m) \}$ is a nonnegative supermartingale issued at 1.

When $\lambda_1 = \dots = \lambda$, we see that
\begin{equation}
    \Exp \exp(f_{pt}(m)) \le \left( 1+ \lambda(\mu(P) - m) + \frac{\lambda^p}{p} \left(4\kappa + |\mu(P) - m|^p \right)  + (p-1/p) \varepsilon  \right)^t.
\end{equation}
Define the function
\begin{equation}
    B^+_{pt}(m) = t \log \left( 1+ \lambda(\mu(P) - m) + \frac{\lambda^p}{p} \left(4\kappa + |\mu(P) - m|^p \right)  + (p-1/p) \varepsilon  \right) + \log(2/\delta).
\end{equation}
Markov's inequality yields
\begin{equation}
   \forall m \in \mathbb R,  \Pr [ f_{pt}(m) \le  B^+_{pt}(m)  ] \ge 1- \delta/2.
\end{equation}
Suppose $m = \pi_t$ is a solution in $(\mu, \infty)$ (existence to be discussed soon) to the equation
\begin{equation}\label{eqn:inf-var-upper-eqn}
     B^+_{pt}(m) = -\log(2/\alpha) - t\log(1 + \lambda^p \kappa / p + (p-1/p)\varepsilon).
\end{equation}
Then
\begin{gather}
    \Pr [ f_{pt}(\pi_t) \le -\log(2/\alpha) - t\log(1 + \lambda^p \kappa / p + (p-1/p)\varepsilon)  ] \ge 1- \delta/2
    \\
    \Pr[ \max({\CIRC_t}^p) \le \pi_t ] \ge 1- \delta/2.
\end{gather}

Let us see how $\pi_t$ can exist. The equation that $\pi_t$ would satisfy,~\eqref{eqn:inf-var-upper-eqn}, expands into
\begin{gather}
    1 - \lambda(m -\mu(P)) + \frac{\lambda^p}{p} \left(4\kappa + | m - \mu(P) |^p \right)  + (p-1/p) \varepsilon   = \left( \frac{4}{\alpha \delta} \right)^{-1/t} \frac{1}{1 + \lambda^p \kappa / p + (p-1/p)\varepsilon};
    \\
    \frac{\lambda^p}{p} |m - \mu(P)|^p - \lambda(m - \mu(P)) + \underbrace{ 1 + 4\lambda^p \kappa/p + (p-1/p)\varepsilon - \left( \frac{4}{\alpha \delta} \right)^{-1/t} \frac{1}{1 + \lambda^p \kappa / p + (p-1/p)\varepsilon} }_{\mathfrak{C}} = 0
\end{gather}
So, if we can make sure that there is a $c > 0$ such that
\begin{equation}\label{eqn:cC-condition}
  \underbrace{ (\lambda^p/p)^{1/(p-1)} \lambda^{-p/(p-1)} }_{p^{-1/(p-1)}} \mathfrak C =   \left(\frac{c}{(1+c)^p} \right)^{1/(p-1)}
\end{equation}
then according to \cref{lem:p-poly-zero-2} such $\pi_t$ exists and
\begin{equation}
   \pi_t =  \mu(P)  + (1+c) \lambda^{-1} \mathfrak C.
\end{equation}

To achieve this, recall that we have the following assumptions:
\begin{enumerate}
    \item $t \ge \varepsilon^{-1} \log(4/\alpha \delta)$. So $\e^{-\varepsilon} \le \left( \frac{4}{\alpha\delta} \right)^{-1/t} < 1$.
    \item $\lambda^p \kappa = \varepsilon$.
    \item $\varepsilon \in (0, \frac{p-1}{7p}]$.
\end{enumerate}

Then, we can bound $\mathfrak C =  1 + 4\lambda^p \kappa/p + (p-1/p)\varepsilon - \left( \frac{4}{\alpha \delta} \right)^{-1/t} \frac{1}{1 + \lambda^p \kappa / p + (p-1/p)\varepsilon}$ by functions of $\varepsilon$:
\begin{equation}
 1 + (p+3/p) \varepsilon - \frac{1}{1 + p \varepsilon}  \le \mathfrak C \le 1 + (p+3/p) \varepsilon  - \e^{-\varepsilon} \frac{1}{1 + p \varepsilon}.
\end{equation}
Applying \cref{lem:linear-bound-at-0-infvar}, we have $0 < \mathfrak C < 7 \varepsilon \le \frac{p-1}{p} $. 

 By elementary calculus, the function $ \frac{x}{(1+x)^p} $ ($x>0$) takes its maximum $J = \frac{(p-1)^{p-1}}{p^p}$ at $x_p = \frac{1}{p-1}$. Since $C \le \frac{p-1}{p} = (pJ)^{1/(p-1)}$, there is a $c \in (0, x_p]$ that satisfies $\mathfrak C = \left( \frac{pc}{(1+c)^p} \right)^{1/(p-1)}$, which is exactly the condition~\eqref{eqn:cC-condition}.

Therefore, $\pi_t$ exists and $\pi_t = \mu(p) + (1+c)\lambda^{-1} \mathfrak C \le \mu(P) + 7(1+x_p) \kappa^{1/p} \varepsilon^{(p-1)/p} $. It then follows that
\begin{equation}
    \Pr[ \max({\CIRC_t}^p) \le \mu(P) +  7 (1+x_p) \kappa^{1/p} \varepsilon^{(p-1)/p} ] \ge 1- \delta/2.
\end{equation}

The other direction, $\Pr[ \min({\CIRC_t}^p) \ge \mu(P) - 7 (1+x_p) \kappa^{1/p} \varepsilon^{(p-1)/p} ] \ge 1- \delta/2$, is entirely analogous. So we have
\begin{equation}\label{eqn:p-var-con}
    \Pr[ \diam({\CIRC_t}^p) \le  14 (1+x_p) \kappa^{1/p} \varepsilon^{(p-1)/p} ] \ge 1- \delta.
\end{equation}
Finally, recall that $x_p = \frac{1}{p-1}$. So the bound~\eqref{eqn:p-var-con} is exactly the one stated in the \cref{thm:tight-infvar}.
\end{proof}

\begin{proof}[Proof of \cref{thm:bet}]
    
    First, each $1 + \lambda_i(X_t  - \mu(P)) -\varepsilon|\lambda_t|$ is nonnegative since
    \begin{equation}
        \varepsilon \le \frac{1}{|\lambda_t|} - 1  \le \frac{1}{|\lambda_t|} + X_t - \mu(P) =  \frac{1 + \lambda_t(X_t  - \mu(P))}{|\lambda_t|}.
    \end{equation}
   Further,
    \begin{align}
        &\Expw_{X_t \sim Q}[  1 + \lambda_t(X_t  - \mu(P)) -\varepsilon|\lambda_t| ] =1 + \lambda_t\left(\Expw_{X_t\sim Q}[X_t] - \mu(P)\right) -\varepsilon|\lambda_t| 
        \\
        \le & 1 + \lambda_t\left(\Expw_{X_t\sim P}[X_t] + \varepsilon - \mu(P)\right) -\varepsilon|\lambda_t| 
        \\
        = & 1 + \lambda_t\varepsilon  - \varepsilon|\lambda_t|  \le 1.
    \end{align}
    Therefore $\{ L_t \}$ is a nonnegative supermartingale.
\end{proof}






\end{document}